\numberwithin{equation}{section}
\newtheorem{thm}{Theorem}[section]
\newtheorem{pro}[thm]{Proposition}
\newtheorem{lm}[thm]{Lemma}
\newtheorem{cor}[thm]{Corollary}
\theoremstyle{definition}
\theoremstyle{remark}
\newtheorem{rem}[thm]{Remark}
\numberwithin{equation}{section}
\DeclareMathOperator*{\End}{End}
\DeclareMathOperator*{\el}{ell}
\DeclareMathOperator*{\Irr}{Irr}
\DeclareMathOperator*{\disc}{disc}
\DeclareMathOperator*{\temp}{temp}
\DeclareMathOperator*{\Gal}{Gal}
\DeclareMathOperator*{\Res}{Res}
\DeclareMathOperator*{\ad}{ad}
\DeclareMathOperator*{\diag}{diag}
\DeclareMathOperator*{\der}{der}
\DeclareMathOperator*{\Hom}{Hom}
\DeclareMathOperator*{\Sp}{Sp}
\DeclareMathOperator*{\SU}{SU}
\DeclareMathOperator*{\U}{U}
\DeclareMathOperator*{\SO}{SO}
\DeclareMathOperator*{\SL}{SL}
\DeclareMathOperator*{\GL}{GL}
\DeclareMathOperator*{\PGL}{PGL}
\DeclareMathOperator*{\Mat}{Mat}
\newcommand{\s}{\simeq}
\newcommand{\tsigma}{\Sigma}
\newcommand{\ttau}{\Upsilon}
\newcommand{\tphi}{\varphi}
\newcommand{\CC}{\mathbb{C}}
\newcommand{\NN}{\mathbb{N}}
\newcommand{\QQ}{\mathbb{Q}}
\newcommand{\ZZ}{\mathbb{Z}}
\newcommand{\ii}{\mathbf{\textit{i}}}
\def\bA{\bold A}
\def\bG{\bold G}
\def\bP{\bold P}
\def\bM{\bold M}
\def\bN{\bold N}
\def\sC{\mathcal C}
\newcommand{\tG}{\mathcal{G}}
\newcommand{\tbG}{\boldsymbol{\mathcal{G}}}
\newcommand{\tP}{\mathcal{P}}
\newcommand{\tM}{\mathcal{M}}
\newcommand{\tbM}{\boldsymbol{\mathcal{M}}}
\title[Behavior of $R$-groups for $p$-adic inner forms of $SU_n$]{Behavior of $R$-groups for $p$-adic inner forms of quasi-split special unitary groups} 
\author[Kwangho Choiy and David Goldberg]{Kwangho Choiy and David Goldberg} 
\thanks{}
\email{kchoiy@gmail.com}
\email{david77@gmail.com}
\subjclass[2010]{Primary 22E50; Secondary 22E35}
\keywords{}
\begin{document}
\maketitle

\begin{center}
{\em For Freydoon, with deep gratitude for all he has done for us, and for the field. }
\end{center}

\begin{abstract}

We study $R$-groups for $p$-adic inner forms of quasi-split special unitary groups. We prove Arthur's conjecture, the isomorphism between the Knapp-Stein $R$-group and the Langlands-Arthur $R$-group, for quasi-split special unitary groups and their inner forms. Furthermore, we investigate the invariance of the Knapp-Stein $R$-group within $L$-packets and between inner forms. This work is applied to transferring known results in the second-named author's earlier work for quasi-split special unitary groups to their non quasi-split inner forms.
\end{abstract}
\section{Introduction} \label{intro}
In the representation theory of $p$-adic groups, in particular, in the framework of the local Langlands correspondence for a connected reductive algebraic group $\bG$ over a $p$-adic field $F,$ it is of great importance to study the reducibility of parabolically induced representations.
This study yields information on constructing tempered $L$-packets of $\bG$ from discrete $L$-packets of its $F$-Levi subgroup.
The determination of reducibility has been developing over decades via several approaches, for example, by means of harmonic analysis from investigation of poles or zeros of residues of intertwining operators, Plancherel measures, and local $L$-functions, \cite{sh90, sh95, sil79}. 
The method we address here is in terms of the Knapp-Stein $R$-group, which provides a combinatorial description of the tempered dual of $\bG(F)$ as well as its elliptic tempered spectrum.
Further, as conjectured by Arthur, the isomorphism of the Knapp-Stein and Langlands-Arthur $R$-groups, via the endoscopic $R$-group, plays a significant role in the comparison of trace formulas and the endoscopic classification of automorphic representations \cite{art12, kmsw14, mok13}.

While there has been a great deal of progress on the theory of Knapp-Stein $R$-groups for $F$-quasi-split groups $\bG,$ little is known for non $F$-quasi-split groups $\bG'.$ 
In \cite{chgo12}, we investigated the behavior of $R$-groups between $F$-inner forms of $\SL_n,$ and determined the Knapp-Stein $R$-groups for the non quasi-split inner form from those of the split $\SL_n.$
We also proved the Knapp-Stein $R$-group for $\bG'(F)$ embeds as a subgroup of the $R$-group for $\SL_n,$ and we characterized the quotient.
Another approach to this case was carried out in \cite{chaoli} and an example was discovered for which the Knapp-Stein $R$-group for $\bG'(F)$ is strictly smaller than that of $\SL_n.$
In \cite{cgclassical}, we further showed the invariance of $R$-groups between $F$-inner forms of quasi-split classical groups $\SO_{2n+1},$ $\Sp_{2n},$ $\SO_{2n},$ or $\SO_{2n}^*,$ and transferred all known, relevant facts developed by the second-named author in the quasi-split classical groups  to their non quasi-split inner forms.  
Non quasi-split inner forms of $\Sp_{4n}$ and $\SO_{4n}$ are also treated in \cite{han04}. 

To study the Knapp-Stein $R$-group for a non quasi-split group $\bG'$ in general, as one may notice from our previous works \cite{chgo12, cgclassical}, it is natural that we investigate its behavior between $\bG$ and $\bG'$ and transfer the developed theories regarding the Knapp-Stein $R$-group from $\bG$ to $\bG'.$ 
This is due to the notion of Langlands functoriality.
Since $\bG'$ is induced from a new Galois action twisted by a Galois $1$-cocycle in the inner automorphisms of $\bG,$ their $L$-groups are isomorphic over the Galois group and the set of $L$-parameters for $\bG'$ is contained in that for $\bG.$
In this paper, combining this strategy and the restriction method, we study the Knapp-Stein $R$-group for non quasi-split $F$-inner forms of quasi-split special unitary groups.

More precisely, 
we fix a quadratic extension $E$ of a $p$-adic field $F$ of characteristic zero. 
Let $\bG_n = \SU_n$ be a quasi-split special unitary group over $F$ with respect to $E/F$ and let $\bG'_n$ be its non quasi-split inner form over $F$. 
A simple consequence from the Satake classification or a computation of Galois cohomology reduces our study to the case when $n$ is even. In fact, there is a unique non quasi-split $F$-inner form $\bG'_n,$ up to $F$-isomorphism (see Section \ref{structure}). 
For the rest of the introduction, we assume that $n$ is even, unless otherwise stated. Let $\bM'$ be an $F$-Levi subgroup of $\bG'_n,$ which is an $F$-inner form of an $F$-Levi subgroup $\bM$ of $\bG_n.$ Then, $\bM = \tbM \cap \bG_n$ and $\bM' = \tbM' \cap \bG'_n,$ where $\tbM$ is an $F$-Levi subgroup of a quasi-split unitary group $\tbG_n = \U_n$ over $F$ with respect to $E/F$ and $\tbM'$ is an $F$-Levi subgroup of a non quasi-split $F$-inner form $\tbG'_n$ of $\tbG_n.$ 
We shall use $G$ for the group  $\bG(F)$ of $F$-points of any connected reductive algebraic group $\bG$ over $F.$

Given an elliptic tempered $L$-parameter $\phi \in \Phi_{\disc}(M),$ 
by \cite[Th\'{e}or\`{e}m 8.1]{la85}, we have a lifting $\tphi \in \Phi_{\disc}(\tM)$ commuting with the natural projection $\widehat{\tM} \twoheadrightarrow \widehat{M},$ 
where $\widehat{\tM}$ and $\widehat{M}$ respectively denote the connected components of the $L$-groups of $\tbM$ and $\bM$ (see Section \ref{pre} for the details).
Restricting the $L$-packet $\Pi_{\tphi}(\tM)$ constructed by Rogawski \cite{rog90} and Mok \cite{mok13} 
(see Section \ref{LLC for unitary}),
we construct an $L$-packet  $\Pi_{\phi}(M)$ as the set of isomorphism classes of irreducible constituents in the restriction from $\tM$ to $M.$
All the arguments used for $\bM$ can be applied to  $\bM',$ as in Kaletha-Minguez-Shin-White \cite{kmsw14}
and the details are described in Section \ref{L-packets}.
For any $\sigma \in \Pi_{\phi}(M),$ and $\sigma' \in \Pi_{\phi}(M'),$ we prove 
\[
R_{\sigma} \s R_{\phi, \sigma} ~~ \text{and} ~~R_{\sigma'} \s R_{\phi, \sigma'}.
\]
In each of the above isomorphisms, the left side is the Knapp-Stein $R$-group, and and the right side is the Langlands-Arthur $R$-group, defined in Section \ref{section for def of R}. 
This is known as Arthur's conjecture, predicted in \cite{art89ast}, for $\bG$ and $\bG'$ (See Theorem \ref{arthur conj}).
In the course of the proofs, we apply some known results about $R$-groups for
$\tbG_n$ and $\tbG'_n$ 
in \cite{go95, kmsw14, mok13}, which are recalled in Section \ref{R-groups for U(n) and its inner forms}.
We also investigate and utilize some relationships between identity components of the centralizers of the images of $\tphi$ and $\phi$ in $\widehat G$ and $\widehat \tG$ (see Section \ref{L-groups} and Lemma \ref{lm for identity comp}).

We further study the invariance of the Knapp-Stein $R$-groups for $\bG_n$ and $\bG'_n.$ 
Namely, given $\sigma_1, ~ \sigma_2 \in \Pi_{\phi}(M),$ and $\sigma'_1, ~ \sigma'_2 \in \Pi_{\phi}(M'),$ we prove
$
R_{\sigma_1} \s R_{\sigma_2}, ~~ \text{and}~~
R_{\sigma'_1} \s R_{\sigma'_2}
$
(Theorem \ref{invariance within L-packet}).
Moreover, given $\sigma \in \Pi_{\phi}(M)$ and $\sigma' \in \Pi_{\phi}(M),$ we prove
$
R_{\sigma} \s R_{\sigma'}
$
(Theorem \ref{invariance 2 within L-packet}).
The crucial idea is to study the stabilizers $W(\sigma)$ and $W(\sigma').$ 
Theorem \ref{equality on W} shows
\begin{equation} \label{iso intro}
W(\sigma) \s 
\{
w \in W_M : {^w}\tsigma \s \tsigma \lambda ~~ \text{ for some}~ \lambda \in (\tM/M)^D
\},
\end{equation}
where $W_M$ is the Weyl group of $\bM$ in $\bG,$ 
$\tsigma \in \Pi_{\tphi}(\tM)$ is a lifting of $\sigma,$ and $(\tM/M)^D$ is the group of continuous characters on $\tM$ which are trivial on $M.$
The same is true for $W(\sigma').$
The isomorphism \eqref{iso intro} stems from the group structure of $M$ and $M'$ and the description of irreducible representations of $M$ and $M',$ which are discovered in \cite[Section 2]{go95} and explained in Section \ref{behavior of Rgps}.

As an application, a combination of Theorem \ref{invariance 2 within L-packet} and the second-named author's earlier result \cite[Theorem 3.7]{go95} shows that the Knapp-Stein $R$-group, $R_{\sigma'},$ for non quasi-split inner forms $\bG'$ can be expressed in terms of a subgroup in $R_{\sigma'}$ and $\ZZ^d$ for some integer $d.$
It follows that $R_{\sigma'}$ is in general non-abelian (see Remark \ref{final remark}).

In Section \ref{pre}, we recall basic notation and background, provide the detailed group structure of $\tbG_n,$ $\bG_n,$ $\tbG'_n,$ $\bG'_n,$ and their $F$-Levi subgroups, and study relations in their $L$-groups.
In Section \ref{weyl group actions}, we describe Weyl group actions on Levi subgroups and their representations. In Section  \ref{R-groups for U(n) and its inner forms}, we revisit the theory of $R$--groups for $\tbG_n$ and $\tbG_n'$ based on \cite{go95, kmsw14, mok13}.
In Section \ref{R-groups for SU(n) and its inner forms}, we prove the Arthur's conjecture for $\bG_n$ and $\bG_n'$ and the invariance of their $R$-groups.

\subsection*{Acknowledgements}  
K. Choiy was partially supported by an AMS-Simons Travel Grant.
D. Goldberg was partially supported by  Simons Foundation Collaboration Grant 279153.

\section{Preliminaries} \label{pre}
\subsection{Basic notation and background} \label{basic notation}
Let $p$ be a prime.  We let $F$ be a $p$-adic field of characteristic $0,$ i.e., a finite extension of $\QQ_{p}.$  
Fix an algebraic closure $\bar{F}$ of $F.$ 
Given a connected reductive algebraic group $\bG$ defined over $F,$  we write $\bG(F)$ for the group of $F$-points. Unless otherwise stated, we shall use $G$ for $\bG(F)$ and likewise for other algebraic groups defined over $F.$

Fix a minimal $F$-parabolic subgroup $\bP_0$ of $\bG$ with Levi decomposition $\bP_0=\bM_0 \bN_0,$ where $\bM_0$ denotes a Levi factor and $\bN_0$ denotes the unipotent radical. 
We denote by $\bA_0$ the split component of $\bM_0,$ that is, the maximal $F$-split torus in the center of $\bM_0,$ and by $\Delta$ the set of simple roots of $\bA_0$ in $\bN_0.$ 
We say an $F$-parabolic subgroup $\bP$ of $\bG$ to be standard if it contains $\bP_0.$

Given an $F$-parabolic subgroup $\bP$ with Levi decomposition $\bP=\bM\bN,$ there exist a subset $\Theta \subseteq \Delta$ such that $\bM$ equals $\bM_{\Theta},$ the Levi subgroup generated by $\Theta.$
Note that $\bM \supseteq \bM_0$ and $\bN \subseteq \bN_0.$
We write  $\bA_{\bM_{\Theta}}$ for the split component $\bA_{\bM}$ of $\bM=\bM_{\Theta}.$ 
It follows that $\bA_{\bM}$ equals the identity component
$(\cap_{\alpha \in \Theta} \ker \alpha
)^{\circ}$ in $\bA_0,$
so that $\bM = Z_\bG(\bA_\bM),$ 
where $Z_\bG(\bA_\bM)$ is the centralizer of $\bA_\bM$ in $\bG.$ 
We refer the reader to \cite[Proposition 20.4]{borel91} and \cite[Section 15.1]{springer98} for the details. 

We let $\Phi(P, A_M)$ denote the set of reduced roots of $\bP$ with respect to $\bA_\bM.$ 
Denote by $W_M = W(\bG, \bA_\bM) := N_\bG(\bA_\bM) / Z_\bG(\bA_\bM)$ the Weyl group of $\bA_\bM$ in $\bG,$ 
where $N_\bG(\bA_\bM)$ is the normalizer of $\bA_\bM$ in $\bG.$ 
For simplicity, we write $\bA_0 = \bA_{\bM_0}.$ 

For any topological group $H,$ we write $Z(H)$ for the center of $H.$ Denote by $H^\circ$ the identity component of $H$ and by $\pi_0(H)$ the group $H/H^\circ$ of connected components of $H.$ 
We write $H^D$ for the group, $\Hom(H , \CC^{\times}),$ of all continuous characters and by $\mathbbm{1}$ the trivial character.
We say a character is unitary if its image is in the unit circle $S^1 \subset \CC^{\times}.$ 
For any Galois module $J,$ we denote by $H^i(F, J) := H^i(\Gal (\bar{F} / F), J(\bar{F}))$ 
the Galois cohomology of $J$ for $i \in \NN.$

Let $\bG$ be a connected reductive algebraic group  over $F.$ 
We denote by $\Irr(G)$ the set of isomorphism classes of irreducible admissible complex representations of $G.$ 
If there is no confusion, we do not make a distinction between each isomorphism class and its representative. For any $\sigma \in \Irr(M),$ we write $\ii_{G,M} (\sigma)$ for the normalized (twisted by $\delta_{P}^{1/2}$) induced representation, where $\delta_P$ denotes the modulus character of $P.$ 
We denote by $\sigma^{\vee}$ the contragredient of $\sigma.$

Denote by $\Pi_{\disc}(G)$ and $\Pi_{\temp}(G)$  the subsets of $\Irr(G)$ which respectively consist of discrete series and tempered representations,
where, a discrete series representation is an irreducible, admissible, unitary representation whose matrix coefficients are square-integrable modulo the center of $G,$ that is, in $L^2(G/Z(G)),$
and a tempered representation is an irreducible, admissible, unitary representation whose matrix coefficients are in $L^{2+\epsilon}(G/Z(G))$ for all $\epsilon > 0.$
It is clear that $\Pi_{\disc}(G) \subset \Pi_{\temp}(G).$

We denote by $W_F$ the Weil group of $F$ and by $\Gamma$ the absolute Galois group $\Gal(\bar{F} / F).$ 
By fixing $\Gamma$-invariant splitting data, 
the $L$-group of $G$ is defined as a semi-direct product $^{L}G := \widehat{G} \rtimes W_F$ (see \cite[Section 2]{bo79}).
Following \cite[Section 8.2]{bo79}, an $L$-parameter for $G$ is an admissible homomorphism 
\[
\phi: W_F \times SL_2(\CC) \rightarrow {^L}G.
\]
Two $L$-parameters are said to be equivalent if they are conjugate by $\widehat{G}.$
We denote by $\Phi(G)$ the set of equivalence
classes of $L$-parameters for $G.$
 
We denote by $C_{\phi}(\widehat{G})$ the centralizer of the image of $\phi$ in $\widehat{G}.$
The center of $^{L}G$ is the $\Gamma$-invariant group $Z(\widehat{G})^{\Gamma}.$ 
Note that $C_{\phi} \supset Z(\widehat{G})^{\Gamma}.$ 
We say that $\phi$ is elliptic if the quotient group 
$C_{\phi}(\widehat G) / Z(\widehat{G})^{\Gamma}
$
is finite, and $\phi$ is tempered if $\phi(W_F)$ is bounded. 
We denote by $\Phi_{\el}(G)$ and $\Phi_{\temp}(G)$ the subset of $\Phi(G)$ which respectively consist of elliptic and tempered $L$-parameters of $G.$ 
We set $\Phi_{\disc}(G) = \Phi_{\el}(G) \cap \Phi_{\temp}(G).$

The local Langlands conjecture for $G$ predicts that there is a surjective finite-to-one map from $\Irr(G)$ to $\Phi(G).$
Given $\phi \in \Phi(G),$ 
we write $\Pi_{\phi}(G)$ for the $L$-packet attached to $\phi,$ and then the local Langlands conjecture implies that
\[
\Irr(G) = \bigsqcup_{\phi \in \Phi(G)}\Pi_{\phi}(G).
\] 
It is expected that $\Phi_{\disc}(G)$ and $\Phi_{\temp}(G)$ respectively parameterize $\Pi_{\disc}(G)$ and $\Pi_{\temp}(G).$

Given two connected reductive algebraic groups $\bG$ and $\bG'$ over $F,$ 
 $\bG'$ is said to be an \textit{$F$-inner form} of $\bG$ with respect to an $\bar{F}$-isomorphism $\varphi: \bG' \overset{\sim}{\rightarrow} \bG$ if $\varphi \circ \tau(\varphi)^{-1}$ is an inner automorphism ($g \mapsto xgx^{-1}$) defined over $\bar{F}$ for all $\tau \in \Gal (\bar{F} / F)$ (see \cite[2.4(3)]{bo79} or \cite[p.280]{kot97}).
If there is no confusion, we often omit the references to $F$ and $\varphi.$ 
It is well known from \cite[p.280]{kot97} that there is a bijection between $H^1(F, \bG_{\ad})$ and the set of isomorphism classes of $F$-inner forms of $\bG,$ where $\bG_{\ad}:=\bG / Z(\bG).$
We note that, when $\bG$ and $\bG'$ are inner forms of each other, we have $^L{G} \s {^LG'}$ \cite[Section 2.4(3)]{bo79}. 
\subsection{$R$-groups} \label{section for def of R}
We review the definitions of Knapp-Stein, Langlands-Arthur, and endoscopic $R$--groups.
Let $\bG$ be a connected reductive algebraic group over $F,$ and let $\bM$ be an $F$-Levi subgroup of $\bG.$
Given $\sigma \in \Irr(M)$ and $w \in W_M,$ we write ${^w}\sigma$ for the representation given by ${^w}\sigma(x)=\sigma(w^{-1}xw).$ 
Note that, for the purpose of studying $R$--groups, we do not distinguish the representative of $w,$ since the isomorphism class of ${^w}\sigma$ is independent of the choices of representatives in $G$ of $w \in W_M.$
Assume that $\sigma$ lies in $\Pi_{\disc}(M),$ we define the stabilizer of $\sigma$ in $W_M$
\[ W(\sigma) := \{ w \in W_M : {^w}\sigma \s \sigma \}.
\]
Write $\Delta'_\sigma$ for $\{ \alpha \in \Phi(P, A_M) : \mu_{\alpha} (\sigma) = 0 \},$ where $\mu_{\alpha} (\sigma)$ is the rank one Plancherel measure for $\sigma$ attached to $\alpha$ \cite[p.1108]{goldberg-class}. 
The \textit{Knapp-Stein $R$-group} is defined by
\[
R_{\sigma} := \{ w \in W(\sigma) : w \alpha > 0, \; \forall \alpha \in \Delta'_\sigma \}.
\]
We denote by $W^{\circ}_{\sigma}$ the subgroup of $W(\sigma),$ generated by the reflections in the roots of $\Delta'_\sigma.$
Then, for any $\sigma \in \Pi_{\disc}(M),$ we have 
\[
W(\sigma) = R(\sigma) \ltimes W^{\circ}_{\sigma},
\]
which yields another description of the Knapp-Stein $R$-group
\[
R(\sigma) \s  W(\sigma)/W^{\circ}_{\sigma}.
\]
We refer to \cite{ks72, sil78, sil78cor} for details.

Given an $L$-parameter $\phi \in \Phi(M),$
we also consider $\phi$ as an $L$-parameter for $G$ via the inclusion $\widehat M \hookrightarrow \widehat G.$
Fix a maximal torus $T_{\phi}$ in $C_{\phi}(\widehat{G})^{\circ}.$ 
We set
\[
W_{\phi}^{\circ} := N_{C_{\phi}(\widehat{G})^{\circ}} (T_{\phi}) /  Z_{C_{\phi}(\widehat{G})^{\circ}} (T_{\phi}), \quad W_{\phi} := N_{C_{\phi}(\widehat{G})} (T_{\phi}) /  Z_{C_{\phi}(\widehat{G})} (T_{\phi}).
\]
The \textit{endoscopic $R$-group} $R_{\phi}$ is defined as follows
\[
R_{\phi}:=W_{\phi}/W_{\phi}^{\circ}.
\]
We identify $W_{\phi}$ with a subgroup of $W_M$ (see \cite[p.45]{art89ast}). 
For $\sigma \in \Pi_{\phi}(M),$ we set
\begin{equation*} \label{def of W_phi, sigma}
W_{\phi, \sigma}^{\circ} :=  W_{\phi}^{\circ} \cap W(\sigma), \quad  W_{\phi, \sigma} :=  W_{\phi} \cap W(\sigma).
\end{equation*}
The \textit{Langlands-Arthur $R$-group} $R_{\phi, \sigma}$ is defined as follows
\[
R_{\phi, \sigma}:=W_{\phi, \sigma}/W_{\phi, \sigma}^{\circ}.
\]

\section{Structure theory of Levi subgroups} \label{structure}
We discuss the group structure of $\U_n,$ $\SU_n,$ their $F$-inner forms, and their $F$-Levi subgroups.
We mainly refer to \cite{go95, go06, kmsw14, mok13, sa71}.
Fix a quadratic extension $E/F$ with $\bar{~~}$ the non-trivial Galois element.
For any positive integer $n,$ we let
\[ 
J_n := 
\left( \begin{array}{ccccc}
 0& & & & 1\\
 & & & -1 &\\
 & & 1 & &\\ 
 & \iddots & & &\\ 
(-1)^n & & & &0 \\ 
 \end{array} \right) \in {\Mat}_{n \times n}(\ZZ).
\]   
We denote by $\Res_{E/F}$ the Weil restriction of scalars of $E/F$ (see \cite[Chapter 1]{weil82} and \cite[2.1.2]{pr94}). 
For $g=(g_{ij}) \in \Res_{E/F} \GL_n,$ we let $\bar{g}=(\bar g_{ij})$ and set $\varepsilon(g) = J_n {^t\bar{g}}^{-1} J_n^{-1},$ where $g \mapsto  {^t}{g}$ is the transpose. 
\subsection{$\U_n$ and its inner forms} \label{structure for Un}
Let $\tbG=\tbG_n$ denote the quasi-split unitary group $\U_{n}$ with respect to $E/F$ and $J_{n}.$ Thus, 
\[
\tbG = \{
g \in {\Res}_{E/F} {\GL}_n : gJ_n {^t\bar{g}} = J_n 
\}
\]
(see \cite[Section 1]{go06}; our $J_n$ is the inverse of $u_n$ therein).
We denote by $\tbM$ an $F$-Levi subgroup of $\tbG.$ Then, $\tbM$ is of the form
\begin{equation} \label{form of tbM}
{\Res}_{E/F} {\GL}_{n_1} \times \cdots \times {\Res}_{E/F} {\GL}_{n_k} \times {\tbG}_m,
\end{equation}
where $\sum_{i=1}^k 2n_i + m = n$ with $n_i \geq 0$ and $m \geq 0.$  
By convention, we note that, ${\tbG}_0 = 1$ for $n$ even, ${\tbG}_1 = \U_1$ for $n$ odd, and ${\GL}_0=1.$
The group of $F$-points $,A_{\tbM},$ of the split component $\bA_{\tbM}$ is of the form
\begin{equation} \label{type of AtM}
\{
{\diag} (x_1I_{n_1}, \cdots, x_k I_{n_k}, I_m, {x}_k^{-1}I_{n_k}, \cdots, {x}_1^{-1}I_{n_1}) : x_i \in F^{\times}
\}.
\end{equation} 

We let $\tbG'=\tbG'_n$ denote an $F$-inner form of $\tbG.$
By the Satake classification in \cite[Section 3.3]{sa71}, 
for $n$ \textbf{odd}, there is \textit{no non quasi-split} $F$-inner form of $\tbG_n.$
On the other hand, for $n$ \textbf{even},
there is a \textit{unique} non quasi-split $F$-inner form $\tbG'_n,$ up to $F$-isomorphism. 
The $\Gamma$-diagram of the connected, simply-connected, semi-simple type of such $\tbG'$ is 
\begin{equation} \label{diagram}
\xy
\POS (10,0) *\cir<2pt>{} ="a" ,
\POS (20,0) *\cir<2pt>{} ="b" ,
\POS (30,0) *\cir<0pt>{} ="e",
\POS (40,0) *\cir<0pt>{} ="f",
\POS (50,0) *\cir<2pt>{} ="h",
\POS (60,0) *\cir<2pt>{} ="i",
\POS (70,-5) *{\bullet} ="j",

\POS (10,-10) *\cir<2pt>{} ="a1" ,
\POS (20,-10) *\cir<2pt>{} ="b1" ,
\POS (30,-10) *\cir<0pt>{} ="e1",
\POS (40,-10) *\cir<0pt>{} ="f1",
\POS (50,-10) *\cir<2pt>{} ="h1",
\POS (60,-10) *\cir<2pt>{} ="i1",

\POS (10,-2) *\cir<0pt>{} ="a2" ,
\POS (20,-2) *\cir<0pt>{} ="b2" ,
\POS (50,-2) *\cir<0pt>{} ="h2",
\POS (60,-2) *\cir<0pt>{} ="i2",

\POS (10,-8) *\cir<0pt>{} ="a3" ,
\POS (20,-8) *\cir<0pt>{} ="b3" ,
\POS (50,-8) *\cir<0pt>{} ="h3",
\POS (60,-8) *\cir<0pt>{} ="i3",

\POS "a" \ar@{-}^<<<{}_<<{}  "b",
\POS "b" \ar@{-}^<<<{}_<<{}  "e",
\POS "e" \ar@{.}^<<<{}_<<{}  "f",
\POS "f" \ar@{-}^<<<{}_<<{} "h",
\POS "h" \ar@{-}^<{}_<<<{} "i",
\POS "i" \ar@{-}^<<<{}_<<{} "j",

\POS "a1" \ar@{-}^<<<{}_<<{}  "b1",
\POS "b1" \ar@{-}^<<<{}_<<{}  "e1",
\POS "e1" \ar@{.}^<<<{}_<<{}  "f1",
\POS "f1" \ar@{-}^<<<{}_<<{} "h1",
\POS "h1" \ar@{-}^<{}_<<<{} "i1",
\POS "i1" \ar@{-}^<<<{}_<<{} "j",

\POS "a2" \ar@{<->} "a3",
\POS "b2" \ar@{<->} "b3",
\POS "h2" \ar@{<->} "h3",
\POS "i2" \ar@{<->} "i3",
\endxy
\end{equation}
(see the table in p.119 of \cite{sa71}). 
In the diagram above, the arrow indicates the non-trivial Galois action.
Furthermore, the black vertex indicates a root in the set of simple roots of a fixed minimal $F$-Levi subgroup $\tbM'_0$ of $\tbG'.$ 
So, we remove only a subset (denoted by $\vartheta$) of pairs ($\Gal(E/F)$-orbits) of white vertices to obtain an $F$-Levi subgroup $\tbM'$ (see \cite[Section 2.2]{sa71} and \cite[Section I.3]{bo79}).
As discussed in Section \ref{basic notation}, the $F$-Levi subgroup $\tbM',$ corresponding to $\Theta = \Delta \setminus \vartheta,$  
is the centralizer in $\tbG'$ of the split component $\bA_{\tbM'}=(\cap_{\alpha \in \Theta} \ker \alpha)^{\circ}.$ 
Then, $\tbM'$ is of the form
\begin{equation} \label{form of tbM'}
{\Res}_{E/F} {\GL}_{n'_1} \times \cdots \times {\Res}_{E/F} {\GL}_{n'_{k'}} \times {\tbG'}_{m'},
\end{equation}
where $\sum_{i=1}^{k'} 2n'_i + m' = n$ with $n'_i \geq 0$ and $m' \geq 2.$ Notice here that $m'$ is always even.
Considering the forms \eqref{form of tbM} and \eqref{form of tbM'}, it is obvious that, if $n_i=n'_i$ for all $i,$ $k=k',$ and $m=m',$ then $\tbM'$ is an $F$-inner form of $\tbM.$
In this case, furthermore, 
two split components $\bA_{\tbM}$ and $\bA_{\tbM'} $ are isomorphic over $F.$ 

\begin{rem}
Since there is a bijection between $H^1(F, (\tbG_n)_{\ad})$ and the set of isomorphism classes of $F$-inner forms of $\tbG_n$ (see Section \ref{basic notation}), using the fact (\cite[Lemma 1.2.1(ii) and p.657]{hala04}) that
\[ 
H^1(F, (\tbG_n)_{\ad}) = \left\{ \begin{array}{ll}
         1, & \mbox{if $n$ is odd},\\
        \ZZ/2\ZZ, & \mbox{if $n$ is even},\end{array} \right .
\]
it follows that, when $n$ is odd, there is no non quasi-split $F$-inner form of $\tbG_n,$ and when  $n$ is even, there is a unique non quasi-split $F$-inner form, $\tbG'_n,$  of $\tbG_n$ up to $F$-isomorphism, as we discussed above.
\end{rem}
\subsection{$\SU_n$ and its inner forms} \label{structure for SUn}
We let $\bG=\bG_n$ denote the quasi-split special unitary group $\SU_n$ with respect to $E/F$ and $J_n.$ Thus,
\[
\bG=\tbG \cap {\Res}_{E/F} {\SL}_n =
\{
g \in {\Res}_{E/F} {\SL}_n : gJ_n {^t\bar{g}} = J_n 
\}.
\]
We denote by $\bM$ an $F$-Levi subgroup of $\bG.$ 
Then, $\bM$ is of the form $\tbM \cap \bG$ and $\bA_{\bM} $ is of the form $\bA_{\tbM} \cap \bG.$ 
{

Due to \eqref{type of AtM}, the $F$-points $A_{\bM}$ of the split component $\bA_{\bM}$ thus equals
\begin{equation*} \label{type of AM}
\{
{\diag} (x_1I_{n_1}, \cdots, x_k I_{n_k}, I_m, {x}_k^{-1}I_{n_k}, \cdots, {x}_1^{-1}I_{n_1}) : x_i \in F^{\times}\}.
\end{equation*} 
}
Especially, when $m \geq 2,$ from \eqref{form of tbM} and \cite[Lemma 2.8]{go06} we have a useful isomorphism
\begin{equation} \label{form of bM}
M \s ({\GL}_{n_1}(E) \times \cdots \times {\GL}_{n_k}(E)) \rtimes G_m,
\end{equation}
where $\sum_{i=1}^k 2n_i + m = n$ with $n_i \geq 0.$
More precisely, for $(g, h)$ with $g \in {\GL}_{n_1}(E) \times \cdots \times {\GL}_{n_k}(E)$ and $h \in G_m,$ the isomorphism \eqref{form of bM} from $({\GL}_{n_1}(E) \times \cdots \times {\GL}_{n_k}(E)) \rtimes G_m$ to $M$ is given by
\begin{equation} \label{imp iso}
(g, h) \longmapsto 
\left( \begin{array}{ccc}
 g& & \\
 & \alpha_m(g)^{-1}h & \\
 & & \varepsilon(g) \\ 
 \end{array} \right) \in \tM \cap {\SU}_n(F) = M,
\end{equation}
where $\alpha_m(g) = \alpha_m(\det g)$ and for $a \in E^{\times},$ 
\begin{equation} \label{def of alpha}
\alpha_m(a) := 
\left( \begin{array}{ccc}
 a& & \\
 & I_{m-2} & \\
 & & \bar{a}^{-1}\\ 
 \end{array} \right) \in {\GL}_{m}(E).
\end{equation}
By convention, we note that ${\bG}_0={\bG}_1 = 1.$
Given $(g, h)$ with $g \in {\GL}_{n_1}(E) \times \cdots \times {\GL}_{n_k}(E)$ and $h \in G_m,$ the action of  $g$ on $h$ is given by
\[
g \circ h = \alpha_m(g) \cdot h \cdot \alpha_m(g)^{-1}.
\]

Next, we let $\bG'=\bG'_n$ denote an $F$--inner form of $\bG.$
As discussed in Section \ref{structure for Un},
for $n$ \textbf{odd}, there is \textit{no non quasi-split} $F$-inner form of $\bG_n.$
On the other hand, for $n$ \textbf{even},
there is a \textit{unique} non quasi-split $F$-inner form $\bG'_n,$ up to $F$-isomorphism, whose $\Gamma$-diagram is \eqref{diagram}.
Any $F$-Levi subgroup $\bM'$ of $\bG'$ is of the form  $\tbM' \cap \bG'.$ 
Thus, from \eqref{form of tbM'} and \eqref{form of bM}, we have an isomorphism
\begin{equation} \label{form of bM'}
M' \s 
({\GL}_{n'_1}(E) \times \cdots \times {\GL}_{n'_{k'}}(E)) \rtimes {G'}_{m'},
\end{equation}
where $\sum_{i=1}^{k'} 2n'_i + m' = n$ with $n'_i \geq  0$ and $m' \geq 2.$ Notice as before that $m'$ is always even.
To explain \eqref{form of bM'} more precisely, 
for $(g', h')$ with $g' \in {\GL}_{n'_1}(E) \times \cdots \times {\GL}_{n'_k}(E)$ and $h' \in {G'}_{m'},$ the isomorphism \eqref{form of bM'} from $({\GL}_{n'_1}(E) \times \cdots \times {\GL}_{n'_{k'}}(F)) \rtimes {G'}_{m'}$ to $M'$ is given by
\[
(g', h') \longmapsto 
\left( \begin{array}{ccc}
 g'& & \\
 & \alpha_{m'}(g')^{-1}h' & \\
 & & \varepsilon(g') \\ 
 \end{array} \right) \in \tM' \cap {G'}_n = M',
\]
where $\alpha_{m'}(g') = \alpha_{m'}(\det g')$ and for $a \in E^{\times},$ 
\[
\alpha_{m'}(a) := 
\left( \begin{array}{ccc}
 a& & \\
 & I_{m'-2} & \\
 & & \bar{a}^{-1}\\ 
 \end{array} \right) \in {\GL}_{m'}(E).
\]
By convention, we note that, ${\bG'}_0 = 1.$
Given $(g', h') \in M'$ with  $g' \in {\GL}_{n'_1}(E) \times \cdots \times {\GL}_{n'_k}(E)$ and $h' \in G'_{m'},$ the action of $g'$ on $h'$ is given by
\[
g' \circ h' = \alpha_{m'}(g') \cdot h' \cdot \alpha_{m'}(g')^{-1}.
\]
Like the unitary case in Section \ref{structure for Un}, it is obvious from \eqref{form of bM} and \eqref{form of bM'} that, if $n_i=n'_i$ for all $i,$ $k=k',$ and $m=m',$ then $\bM'$ is an $F$-inner form of $\bM.$ 
In this case, 
there is an $F$--isomorphism $\bA_{\bM} \s \bA_{\bM'}$ between two split components.

We have following exact sequences of algebraic groups
\[
1 \longrightarrow {\bG} \longrightarrow {\tbG}  \overset{\det}{\longrightarrow}  {\U}_1 \longrightarrow 1,
\]
and 
\[
1 \longrightarrow {\bG'} \longrightarrow {\tbG'} \overset{\det}{\longrightarrow}  {\U}_1 \longrightarrow 1.
\]
Applying Galois cohomology, since $H^1(F, \bG)=H^1(F, \bG')=1$  due to \cite[Theorem 6.4]{pr94}, we have following exact sequences of their $F$-points
\[
1 \longrightarrow {G} \longrightarrow {\tG} \overset{\det}{\longrightarrow}  E^{1} \longrightarrow 1,
\]
and 
\[
1 \longrightarrow {G'} \longrightarrow {\tG'} \overset{\det}{\longrightarrow}  E^{1} \longrightarrow 1,
\]
where $E^1 = \{ x \in E : N_{E/K} (x) = x \bar x = 1 \}.$
All above exact sequences are true for $F$-Levi subgroups. 
In particular, $H^1(F, \bM) =H^1(F, \bM')= 1$ since $H^1(F, \bM) \hookrightarrow H^1(F, \bG)=1$ and $H^1(F, \bM') \hookrightarrow H^1(F, \bG')=1$
(see \cite[p.95, footnote]{morel}, \cite[p.270]{kot97}, and \cite[Remark 2.5]{chgo12}).
We further have
\[
{\tbG}_{\der} = {\bG}_{\der}=\bG, ~~{\tbG'}_{\der} = {\bG'}_{\der}=\bG', ~~{\tbM}_{\der} = {\bM}_{\der} \subset \bM, ~~{\tbM'}_{\der} = {\bM'}_{\der} \subset  \bM'.
\]

\subsection{$L$-groups} \label{L-groups}
We describe $L$-groups of $\tbG=\U_n, \bG=\SU_n,$ their inner forms $\tbG', \bG',$ and the $L$--groups of  their $F$-Levi subgroups $\tbM, \tbM', \bM,$ and  $\bM'.$ 
Furthermore, we explain a relationship between $L$-groups of $\tbM$ and $\bM,$ and investigate a connection between two $\Gamma$-split components ${\bA}_{\widehat{\tM}}$ and ${\bA}_{\widehat{M}} $ of $\widehat{\tM}$ and $\widehat{M},$ defined as follows
\[
{\bA}_{\widehat{\tM}} := (Z(\widehat{\tM})^{\Gamma})^\circ \quad \text{and} \quad {\bA}_{\widehat{M}} := (Z(\widehat{M})^{\Gamma})^\circ.
\]
These will be used in Section \ref{R-groups for SU(n) and its inner forms}.
Based on \cite{mok13}, we set
\[
{^L}{\tG}={^L}{\tG'}={\GL}_n(\CC)\rtimes W_F,
\]
where $W_E$ acts trivially on ${\GL}_n(\CC),$ and the action of $w_c \in W_F \setminus W_E$ on $ \hat g \in \GL_n(\CC)$ is given by
\begin{equation} \label{galois action on L-gp}
w_c( \hat g) = J_n^{-1} {^t}\hat g^{-1} J_n. 
\end{equation}
For $\bG$ and $\bG',$ the $L$-group is
\[
{^L}{G}={^L}{G'}={\PGL}_n(\CC)\rtimes W_F,
\]
where $W_E$ acts trivially on ${\PGL}_n(\CC),$ and the action of $w_c \in W_F \setminus W_E$ on $\hat g \in \PGL_n(\CC)$ is given by \eqref{galois action on L-gp}.

Let $\tbM,$ $\tbM',$ $\bM,$ and $\bM'$ be $F$--Levi subgroups of $\tbG,$ $\tbG',$ $\bG,$ and $\bG',$ respectively, such that $\tbM'$ is an $F$--inner form of $\tbM,$ and $\bM'$ is an $F$--inner form of $\bM.$
We have
\begin{equation*}  \label{iso between L-groups}
\widehat \tM = \widehat{\tM'} \subset \widehat \tG = \widehat{\tG'}, \quad {^L}{\tM}={^L}{\tM'}=\widehat \tM \rtimes W_F, \quad \widehat M = \widehat{M'}\subset\widehat G = \widehat{G'}, \quad {^L}{M}={^L}{M'}=\widehat M\rtimes W_F.
\end{equation*}
Considering \eqref{form of tbM} and \eqref{form of tbM'}, we set
\[
\tbM = {\Res}_{E/F} {\GL}_{n_1} \times \cdots \times {\Res}_{E/F} {\GL}_{n_k} \times \tbG_m,
\]
\[
\tbM' = {\Res}_{E/F} {\GL}_{n_1} \times \cdots \times {\Res}_{E/F} {\GL}_{n_k} \times \tbG'_m,
\]
where $\tbG'_m$ is an $F$-inner form of $\tbG_m,$ $\sum_{i=1}^k 2n_i + m = n$ with $n_i \geq  0,$ and $m \geq 0.$
Then, we have
\[
\widehat{\tbM}=\widehat{\tbM'} = ({\GL}_{n_1}(\CC)\times{\GL}_{n_1}(\CC)) \times \cdots \times ({\GL}_{n_k}(\CC)\times{\GL}_{n_k}(\CC))\times {\GL}_m(\CC).
\]
$W_E$ acts trivially on $\widehat{\tbM},$ and the action of $w_c \in W_F \setminus W_E$ on $((\hat g_{11}, \hat g_{12}), (\hat g_{21}, \hat g_{22}), \cdots, (\hat g_{k1}, \hat g_{k2}), \hat h) \in \widehat{\tbM}$ is given by 
\begin{equation} \label{galois action on L-gp of tM}
w_c((\hat g_{11}, \hat g_{12}), (\hat g_{21}, \hat g_{22}), \cdots, (\hat g_{k1}, \hat g_{k2}), \hat h)
=
((\hat g_{12}, \hat g_{11}), (\hat g_{22}, \hat g_{21}), \cdots, (\hat g_{k2}, \hat g_{k1}), 
 J_m^{-1} {^t}\hat h^{-1} J_m). 
\end{equation}

Next, we note that, since $\tbM_{\der}$ and $\bM_{\der}$ are simply connected, we have
\[
\widehat{\tM_{\der}} =
 {(\widehat{\tM})}_{\ad} =
  \widehat{\tM}/Z(\widehat{\tM}), ~~
   \widehat{M_{\der}} = {\widehat{(M)}}_{\ad} = \widehat{M}/Z(\widehat{M}). 
\] 
Further, from \cite[(1.8.1) p.616]{kot84}, the exact sequence of algebraic groups
\[
1 \longrightarrow \bM \longrightarrow \tbM \longrightarrow {\U}_1 \longrightarrow 1
\]
yields an exact sequence
\begin{equation} \label{exact of L centers}
1 \longrightarrow \widehat{{\U}_1}=\CC^{\times} \longrightarrow Z({\widehat{\tM}}) \s (\CC^{\times})^{2k+1} \longrightarrow Z({\widehat{M}}) \longrightarrow 1.
\end{equation}
We thus have the following commutative diagram of $L$-groups (cf., \cite[Remark 2.4]{chgo12})
\[
\begin{CD}
@.  1 @. 1 @.  @.
\\
@.      @VVV        @VVV   @. @.\\
1 @>>> \widehat{{\U}_1} = \CC^{\times} @>{\s}>> \ker @>>> 1 @. @. 
\\
@.      @VVV        @VVV   @VVV  @.\\
1 @>>> Z(\widehat{\tM}) = (\CC^{\times})^{2k+1} @>>> \widehat{\tM} @>>> \widehat{\tM_{\der}} @>>> 1 
\\
@.      @VVV          @VVV   @|  @.
\\
1 @>>> Z(\widehat{M}) = (\CC^{\times})^{2k+1} / \CC^{\times} @>>> \widehat{M} @>>> \widehat{M_{\der}} @>>> 1 \\
@.      @VVV          @VVV   @VVV  @.\\
@.  1 @. 1 @. 1 @. 
\end{CD}
\]
The middle vertical exact sequence becomes
\begin{equation} \label{exact of L-gp of M tM}
1 \longrightarrow \widehat{{\U}_1}=\CC^{\times} \longrightarrow \widehat{\tM}=\widehat{\tM'} \longrightarrow \widehat{M}=\widehat{M'} \longrightarrow 1.
\end{equation}
We also have
\begin{equation} \label{exact of L-gp of G tG}
1 \longrightarrow \widehat{{\U}_1}=\CC^{\times} \longrightarrow \widehat{\tG}=\widehat{\tG'} \longrightarrow  \widehat{G}=\widehat{G'} \longrightarrow 1.
\end{equation}

Note that $\widehat{\U_1}$ in \eqref{exact of L-gp of M tM} and \eqref{exact of L-gp of G tG} equals $ Z(\widehat{\tG})=Z(\widehat{\tG'}).$
Furthermore, $\widehat{\U_1}$ is diagonally embedded into $\widehat{\tG}$ in \eqref{exact of L-gp of G tG} and the action of $W_F$ on $\widehat{\U_1} = \CC^{\times}$ is obtained from \eqref{galois action on L-gp}. 
The action of $W_F$ can be also obtained from \eqref{galois action on L-gp of tM}, since it is diagonally embedded into  $Z(\widehat{\tM})$ in \eqref{exact of L centers} as well.
It thus follows that the subgroups of $\Gamma=\Gal(\bar F/F)$-invariants satisfy
\begin{equation} \label{gamma invariants}
Z(\widehat{\tG})^{\Gamma}=Z(\widehat{\tG'})^{\Gamma} = \widehat{{\U}_1}^{\Gamma} = \{ \pm 1 \}.
\end{equation}
Moreover, using the action of $W_F$ on $\widehat{\tM}=\widehat{\tM'}$ in \eqref{galois action on L-gp of tM} and the surjective map $\widehat{\tM}=\widehat{\tM'} \twoheadrightarrow \widehat{M}=\widehat{M'}$ in \eqref{exact of L-gp of M tM}, the action of $W_F$ on $\widehat M = \widehat{M'}$ can be obtained. 

From \eqref{galois action on L-gp of tM}, we have
\[
{\bA}_{\widehat{\tM}} := (Z(\widehat{\tM})^{\Gamma})^\circ \s ((\CC^{\times})^k \times \{ \pm 1 \})^\circ = (\CC^{\times})^k.
\]
We note that
\[
{\bA}_{\widehat{\tM}}/Z(\widehat{\tG})^{\Gamma} \s ({\bA}_{\widehat{\tM}} \cdot Z(\widehat{\tG}))/Z(\widehat{\tG}) \subset \widehat{G}.
\]
Then, we have the following lemmas.
\begin{lm} \label{before gamma-inv of centers}
The quotient ${\bA}_{\widehat{\tM}}/Z(\widehat{\tG})^{\Gamma}$ is connected.
\end{lm}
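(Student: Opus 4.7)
My plan is to observe that this lemma reduces essentially to a one-line topological fact: the continuous image (equivalently, the quotient by a closed normal subgroup) of a connected topological group is connected.

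First, I would recall that ${\bA}_{\widehat{\tM}} := (Z(\widehat{\tM})^{\Gamma})^\circ$ is connected by its very definition — it is introduced as the identity component of $Z(\widehat{\tM})^{\Gamma}$, and the paper's own computation right before the lemma gives the explicit identification ${\bA}_{\widehat{\tM}} \cong (\CC^{\times})^k$, a connected complex Lie group.

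Next, I would interpret the quotient ${\bA}_{\widehat{\tM}}/Z(\widehat{\tG})^{\Gamma}$ exactly as the paper does in the line preceding the lemma, namely via the identification
\[
{\bA}_{\widehat{\tM}}/Z(\widehat{\tG})^{\Gamma} \s ({\bA}_{\widehat{\tM}} \cdot Z(\widehat{\tG}))/Z(\widehat{\tG}) \subset \widehat{G},
\]
arising from the surjection $\widehat{\tG} \twoheadrightarrow \widehat{G}$ in \eqref{exact of L-gp of G tG}. Under this identification, the quotient in question is nothing other than the image of ${\bA}_{\widehat{\tM}}$ under the continuous (indeed, algebraic) homomorphism $\widehat{\tG} \twoheadrightarrow \widehat{G}$.

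The conclusion is then immediate: the continuous image of a connected space is connected. Equivalently, one can cite the standard fact that, for a connected topological group and any closed normal subgroup, the quotient is connected (the quotient map is continuous and surjective). No step here seems to present a genuine obstacle; the content of the lemma is to record this connectedness so that it can be invoked cleanly later, for instance when identifying $\Gamma$-split components of centers in the subsequent analysis of $L$-groups and $R$-groups.
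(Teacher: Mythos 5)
Your proof is correct, and it is genuinely more elementary than the paper's. The paper does not argue directly from connectedness of ${\bA}_{\widehat{\tM}}$: instead it rewrites ${\bA}_{\widehat{\tM}}/Z(\widehat{\tG})^{\Gamma}$ as $(Z(\widehat{\tM})^{\Gamma})^\circ / \bigl(Z(\widehat{\tG})^{\Gamma} \cap (Z(\widehat{\tM})^{\Gamma})^\circ\bigr)$, identifies this with $Z(\widehat{\tM})^{\Gamma}/Z(\widehat{\tG})^{\Gamma}$ by Arthur's Lemma~1.1 of \cite{art99} (which says $Z(\widehat{\tM})^{\Gamma} = Z(\widehat{\tG})^{\Gamma}\cdot (Z(\widehat{\tM})^{\Gamma})^{\circ}$), and then cites the proof of that lemma for the connectedness of the latter quotient. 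Your route skips the detour: ${\bA}_{\widehat{\tM}} = (Z(\widehat{\tM})^{\Gamma})^\circ \cong (\CC^{\times})^k$ is connected by definition, and any of the possible readings of the quotient (by the subgroup ${\bA}_{\widehat{\tM}} \cap Z(\widehat{\tG})^{\Gamma}$, or as the image $({\bA}_{\widehat{\tM}}\cdot Z(\widehat{\tG}))/Z(\widehat{\tG})$ inside $\widehat{G}$) is a continuous surjective image of ${\bA}_{\widehat{\tM}}$, hence connected. This is worth noting because $Z(\widehat{\tG})^{\Gamma} = \{\pm 1\}$ need not literally sit inside the identity component ${\bA}_{\widehat{\tM}}$, so the quotient notation must be read as an image in any case; your proof handles every reading uniformly, whereas the paper's extra step buys the additional identification with $Z(\widehat{\tM})^{\Gamma}/Z(\widehat{\tG})^{\Gamma}$, which is not needed for the statement itself. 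Ultimately both arguments rest on the same topological fact (Arthur's lemma is itself proved that way), so there is no loss of content in your shorter version.
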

\begin{proof}
Since ${\bA}_{\widehat{\tM}} = (Z(\widehat{\tM})^{\Gamma})^\circ$ by definition and since $Z(\widehat{\tG}) \subset {\bA}_{\widehat{\tM}},$ we have
\[
{\bA}_{\widehat{\tM}}/Z(\widehat{\tG})^{\Gamma}  = (Z(\widehat{\tM})^{\Gamma})^\circ / (Z(\widehat{\tG})^{\Gamma} \cap (Z(\widehat{\tM})^{\Gamma})^\circ)
\]
which is isomorphic to $Z(\widehat{\tM})^{\Gamma}/Z(\widehat{\tG})^{\Gamma} $ by \cite[Lemma 1.1]{art99}.
Note that $Z(\widehat{\tM})^{\Gamma}/Z(\widehat{\tG})^{\Gamma} $ is connected due to the proof of \cite[Lemma 1.1]{art99}. 
\end{proof}
From \eqref{exact of L centers} and \eqref{gamma invariants}, we have the following exact sequence
\[
1 \longrightarrow  {\bA}_{\widehat{\tM}}/Z(\widehat{\tG})^{\Gamma} \longrightarrow Z(\widehat{M})^{\Gamma} \longrightarrow  H^1(F, Z(\widehat{\tG})). 
\]
The following lemma proves that the embedding ${\bA}_{\widehat{\tM}}/Z(\widehat{\tG})^{\Gamma}  \hookrightarrow Z(\widehat{M})^{\Gamma}$ is in fact an equality.
\begin{lm} \label{gamma-inv of centers}
With the notation above, we have 
\[
{\bA}_{\widehat M} = Z(\widehat{M})^{\Gamma}  ~~\text{and}~~
{\bA}_{\widehat{M}} = {\bA}_{\widehat{\tM}}/Z(\widehat{\tG})^{\Gamma}.
\]
\end{lm}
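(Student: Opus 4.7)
The plan is to upgrade the exact sequence
\[
1 \longrightarrow {\bA}_{\widehat{\tM}}/Z(\widehat{\tG})^{\Gamma} \longrightarrow Z(\widehat{M})^{\Gamma} \longrightarrow H^1(F, Z(\widehat{\tG}))
\]
appearing just before the lemma to an equality on the left, and then use Lemma \ref{before gamma-inv of centers} to force $Z(\widehat{M})^\Gamma$ to be connected. Once this is done, the second equality of the lemma follows from the promoted exact sequence, and the first equality follows from the connectedness of $Z(\widehat{M})^\Gamma$ together with the definition $\bA_{\widehat{M}} := (Z(\widehat{M})^\Gamma)^\circ$.

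The main step is to construct, for any $[z] \in Z(\widehat{M})^\Gamma$, a $\Gamma$-invariant lift in $Z(\widehat{\tM})$, thereby showing the connecting map into $H^1(F, Z(\widehat{\tG}))$ is trivial. Given any lift $\tilde{z} \in Z(\widehat{\tM})$, the $\Gamma$-invariance of $[z]$ in $Z(\widehat{M}) = Z(\widehat{\tM})/Z(\widehat{\tG})$ implies that $c(\gamma) := \gamma(\tilde{z})\,\tilde{z}^{-1}$ lies in the kernel $Z(\widehat{\tG}) = \widehat{\U_1} = \CC^\times$ for every $\gamma \in \Gamma$, and a standard verification shows $c$ is a $1$-cocycle. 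Because $\Gal(\bar{F}/E)$ acts trivially on $\widehat{\tM}$ by \eqref{galois action on L-gp of tM}, the cocycle $c$ factors through $\Gal(E/F)$; and by \eqref{galois action on L-gp} applied to scalar matrices, the nontrivial element $\sigma$ acts on $Z(\widehat{\tG}) = \CC^\times$ by $z \mapsto z^{-1}$. The coboundary map for this action is then $x \mapsto \sigma(x)\,x^{-1} = x^{-2}$, which is surjective on $\CC^\times$ by $2$-divisibility. Choosing $x \in \CC^\times$ with $c(\sigma) = x^{-2}$ and replacing $\tilde{z}$ by $\tilde{z} \cdot x^{-1}$ then yields a $\Gamma$-invariant representative of $[z]$ in $Z(\widehat{\tM})^\Gamma$.

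Surjectivity of $Z(\widehat{\tM})^\Gamma \twoheadrightarrow Z(\widehat{M})^\Gamma$ combined with the displayed exact sequence yields ${\bA}_{\widehat{\tM}}/Z(\widehat{\tG})^\Gamma = Z(\widehat{M})^\Gamma$. By Lemma \ref{before gamma-inv of centers}, the left-hand side is connected; hence so is $Z(\widehat{M})^\Gamma$, and it coincides with its identity component $\bA_{\widehat{M}}$. Chaining the two identifications produces $\bA_{\widehat{M}} = {\bA}_{\widehat{\tM}}/Z(\widehat{\tG})^\Gamma$ and proves both assertions simultaneously. The main obstacle to anticipate is the verification that the $\Gamma$-action inherited from \eqref{galois action on L-gp} really is inversion on $Z(\widehat{\tG})$, and the bookkeeping check that the coboundary produced by the lifting argument matches the connecting map from the exact sequence \eqref{exact of L centers}; once these are in hand, $2$-divisibility of $\CC^\times$ makes the remainder of the proof routine.
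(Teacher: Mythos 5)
Your proof is correct, but it takes a genuinely different route from the paper's on both halves of the statement. For the second equality the paper does not touch the connecting map at all: it observes that ${\bA}_{\widehat{\tM}}/Z(\widehat{\tG})^{\Gamma}$ sits inside $Z(\widehat{M})^{\Gamma}={\bA}_{\widehat{M}}$ as a connected torus (Lemma \ref{before gamma-inv of centers}) of the same dimension as the maximal torus ${\bA}_{\widehat{M}}$ of $C_{\phi}(\widehat{G})^{\circ}$, and concludes equality by a dimension count, with a forward reference to Lemma \ref{lm for identity comp}. You instead kill the obstruction directly: the cocycle $\gamma\mapsto\gamma(\tilde z)\tilde z^{-1}$ is inflated from $\Gal(E/F)$, the action of the nontrivial element on $Z(\widehat{\tG})=\CC^{\times}$ is inversion by \eqref{galois action on L-gp}, and divisibility of $\CC^{\times}$ makes every such cocycle a coboundary, so $Z(\widehat{\tM})^{\Gamma}\twoheadrightarrow Z(\widehat{M})^{\Gamma}$ and the displayed exact sequence closes up on the left. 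For the first equality the paper applies \cite[Lemma 1.1]{art99} to get $Z(\widehat{M})^{\Gamma}=Z(\widehat{G})^{\Gamma}\cdot(Z(\widehat{M})^{\Gamma})^{\circ}$ and then uses $Z(\widehat{G})=Z(\PGL_n(\CC))=1$, whereas you deduce connectedness of $Z(\widehat{M})^{\Gamma}$ from the second equality plus Lemma \ref{before gamma-inv of centers}. What your approach buys is self-containedness: it avoids the forward reference to the centralizer computation of Section 5.2 and the slightly elliptical ``same dimension'' assertion, at the cost of an explicit (but routine) cocycle verification; the paper's first equality is shorter as a one-line application of Arthur's lemma. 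One point to keep honest in your write-up: identifying the image of $Z(\widehat{\tM})^{\Gamma}$ in $Z(\widehat{M})^{\Gamma}$ with ${\bA}_{\widehat{\tM}}/Z(\widehat{\tG})^{\Gamma}$ (rather than $Z(\widehat{\tM})^{\Gamma}/Z(\widehat{\tG})^{\Gamma}$) itself relies on \cite[Lemma 1.1]{art99} applied to $\tM$, which is buried in the paper's displayed exact sequence and in the proof of Lemma \ref{before gamma-inv of centers}; since you invoke that sequence as given, this is acceptable, but it should be acknowledged as an input rather than a freebie.
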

\begin{proof}
We note from \cite[Lemma 1.1]{art99} that
\[
Z(\widehat{M})^{\Gamma} = Z(\widehat{G})^{\Gamma} \cdot (Z(\widehat{M})^{\Gamma})^{\circ}.
\]
Since $Z(\widehat{G}) = 1$ and $A_{\widehat M} = (Z(\widehat{M})^{\Gamma})^{\circ},$ the first equality is verified (hence, $Z(\widehat{M})^{\Gamma}$ is connected).
We note that
\begin{equation*} \label{an inclusion}
{\bA}_{\widehat{\tM}}/Z(\widehat{\tG})^{\Gamma} \subset Z(\widehat{M})^{\Gamma} = {\bA}_{\widehat{M}}
\end{equation*}
and ${\bA}_{\widehat{\tM}}/Z(\widehat{\tG})^{\Gamma}$ is connected by Lemma \ref{before gamma-inv of centers}.
Since ${\bA}_{\widehat{M}}$ is a maximal torus in $C_{\phi}^{\circ},$ and since ${\bA}_{\widehat{\tM}}/Z(\widehat{\tG})^{\Gamma}$ is also a torus having the same dimension with ${\bA}_{\widehat{M}}$ (cf., Lemma \ref{lm for identity comp} in Section \ref{arthur conj}), 
we have ${\bA}_{\widehat{\tM}}/Z(\widehat{\tG})^{\Gamma} = {\bA}_{\widehat{M}}.$
\end{proof}
\section{Weyl group actions} \label{weyl group actions}
For an $F$-Levi subgroup $\bM$ of a connected reductive algebraic group $\bG,$ we recall from \ref{basic notation} that
the Weyl groups are 
$W_M = W(\bG, \bA_\bM) := N_\bG(\bA_\bM) / Z_\bG(\bA_\bM),$
$W_{M'} = W(\bG', \bA_{\bM'}) := N_{\bG'}(\bA_{\bM'}) / Z_{\bG'}(\bA_{\bM'}),$ and
$W_{\widehat M} = W(\widehat G, A_{\widehat M}) := N_{\widehat G}(A_{\widehat M}) / Z_{\widehat G}(A_{\widehat M}).$
Through the duality
\begin{equation*} \label{duality}
s_{\alpha} \mapsto s_{\alpha^{\vee}}
\end{equation*}
between simple reflections for $\alpha \in \Delta,$ we have 
\begin{equation*}  \label{two isos}
W_M \s W_{\widehat M} \quad W_{\widehat{M}'} \s W_{M'}.
\end{equation*} 
We thus identify
\begin{equation}  \label{identity for Weyls}
W_M = W_{\widehat M}=W_{\widehat{M}'} =W_{M'}.
\end{equation}
\subsection{On Levi subgroups} \label{WM-actions on Levis}
For simplicity, in Sections \ref{WM-actions on Levis} and \ref{WM-actions on rep of Levis}, we will write $\tbG$ for \textit{both} quasi-split unitary groups $\U_n$ and its non quasi-split $F$--inner forms $\tbG',$ and $\bG$ for \textit{both} quasi-split special unitary groups $\SU_n$ and its non quasi-split $F$--inner forms $\bG'.$

Let $\tbM$ and $\bM$ be $F$-Levi subgroups of $\tbG$ and $\bG,$ respectively. We recall from Section \ref{structure} that
\[
\tM \s
{\GL}_{n_1}(E) \times \cdots \times  {\GL}_{n_{k}}(E) \times \tG_m,
\]
\[
M \s
({\GL}_{n_1}(E) \times \cdots \times  {\GL}_{n_{k}}(E)) \rtimes G_m,
\]
where $\sum_{i=1}^{k} 2n_i + m = n$ with $n_i \geq  0$ and $m \geq 0.$ Notice here that $m\geq 2$ and is always even for non quasi-split inner forms.
We describe the action of Weyl group on Levi subgroups $\tbM$ and $\bM$ as well as irreducible representations of $\tM$ and $M,$ based on the results in \cite{go95, go06}.
We denote by $S_k$ the symmetric group in $k$ letters.
From \cite{go95, go06}, we have
\begin{equation} \label{desc WM}
W_{\tM} = W_{M} = \subset S_k \ltimes \ZZ_2^k.
\end{equation}

More precisely, $W_{\tM}\simeq S\ltimes\sC,$ 
where  $S=\langle (ij)|n_i=n_j \rangle,$ and $\sC = \ZZ_2^k.$ 
For $g \in \tM,$ write
\[
g=(g_1,\dots g_i,\dots,g_j,\dots, g_k,h) \in GL_{n_1}(E) \times GL_{n_2}(E) \times \cdots \times GL_{n_k}(E) \times \tG_m.
\]
The permutation $(ij)$ acts on $g \in \tM$ by
\begin{equation}  \label{action ij}
(ij):(g_1,\dots g_i,\dots,g_j,\dots, g_k,h) \mapsto (g_1,\dots,g_j,\dots,g_i,\dots,g_k,h).
\end{equation}
The finite $2$-group $\ZZ_2^k$ is generated by ``block sign changes" $C_i$ which acts on $g \in \tM$ by
\begin{equation}  \label{action Ci-s}
C_i:(g_1,\dots,g_i,\dots,g_k,h) \mapsto (g_1,\dots,\, \varepsilon(g_i),\dots,g_k,h),
\end{equation}
where $\varepsilon(g_i) = {^t}\bar g_i^{-1}.$

\subsection{On representations of Levi subgroups} \label{WM-actions on rep of Levis}
Set $\tsigma$ to be $\tsigma_1 \otimes \tsigma_2 \otimes \cdots \otimes \tsigma_k \otimes \ttau.$
From \eqref{action ij} and \eqref{action Ci-s}, we have
\begin{gather*}
(ij)\tsigma=\tsigma_1\otimes\cdots
\otimes\tsigma_j\otimes\cdots
\otimes\tsigma_i\otimes\cdots
\otimes\tsigma_k\otimes\ttau;\\
C_i\tsigma=\tsigma_1\otimes\cdots
\otimes\varepsilon(\tsigma_i)\otimes\cdots
\otimes\tsigma_k\otimes\ttau,
\end{gather*}
where $\varepsilon(\tsigma_i)(g_i)=\tsigma_i(\varepsilon(g_i)),$
and these describe the action of $W_{\tM}$ on $\tsigma.$

Now we turn to the case of $\bM.$ 
Note that, since $G_0=G_1=1,$ $W_M$ acts on $M$ and an irreducible representation of $M$ in the same manner for $m=0, 1.$ 
In particular,
when $m=0$ (thus, $n$ is even), 
from the proof of \cite[Lemma 2.4]{go06},
$M$ is of the form
\[
M = \{
\left( \begin{array}{cc}
 g & \\
 & \varepsilon(g) \\ 
 \end{array} \right) : \det (g) \det(\varepsilon(g)) = 1, ~ g \in GL_{n_1}(E) \times GL_{n_2}(E) \times \cdots \times GL_{n_k}(E) \},
\] 
which implies $M \s \{ g \in GL_{n_1}(E) \times GL_{n_2}(E) \times \cdots \times GL_{n_k}(E) : \det g \in F^{\times} \}.$  
When $m=1$ (thus, $n$ is odd), since $\tM \s GL_{n_1}(E) \times GL_{n_2}(E) \times \cdots \times GL_{n_k}(E) \times {\U}_1(F),$ we have
\[
M \s GL_{n_1}(E) \times GL_{n_2}(E) \times \cdots \times GL_{n_k}(E).
\]
Note $\U_1(F) = E^{1} = \{ x \in E : x \bar x=1 \}.$
Further, for any $\tsigma \s \tsigma_1 \otimes \tsigma_2 \otimes \cdots \otimes \tsigma_k \otimes \omega \in \Irr(\tM),$ we have
\[
{\Res}_{G_n}^{\tG_n} \s \tsigma_1 \otimes \tsigma_2 \otimes \cdots \otimes \tsigma_k \otimes \omega \omega^ \varepsilon,
\]
which is always irreducible
(see the proof of \cite[Lemma 2.5]{go06}).
Let us move to the case of $m\geq 2.$
For $g \in M,$ from \eqref{form of bM'}, we write
\[
g=(g_0, h)=(g_1,\dots g_i,\dots,g_j,\dots, g_k,h) \in (GL_{n_1}(E) \times GL_{n_2}(E) \times \cdots \times GL_{n_k}(E)) \rtimes G_m.
\]
Denote $wgw^{-1}$ by $(wg_0, h)$ for $w \in W_{\tM}.$
Following arguments in \cite[p.353]{go06}, 
the permutation $(ij)$ acts on $g \in M$
\begin{align} \label{action ij-s}
(ij):(g_1,\dots g_i,\dots,g_j,\dots, g_k,h) \mapsto 
& (g_1,\dots,g_j,\dots,g_i,\dots,g_k, 
\alpha_m(g_0)^{-1}\alpha_m((ij)g_0)h) \\
&= (g_1,\dots,g_j,\dots,g_i,\dots,g_k, \nonumber
h),
\end{align} 
since $\alpha_m(g_0)^{-1}\alpha_m((ij)g_0) = I_m.$
That is, the permutation $(ij)$ acts trivially on $G_m.$

The finite $2$-group $\ZZ_2^k$ is generated by ``block sign changes" $C_i$ which acts on $g \in M$
\begin{equation}  \label{action Ci}
C_i:(g_1,\dots,g_i,\dots,g_k,h) \mapsto (g_1,\dots,\, \varepsilon(g_i),\dots,g_k,\alpha_m(g_0)^{-1}\alpha_m(C_ig_0)h).
\end{equation}
Note from the definition of $\alpha_m$ in \eqref{def of alpha} that 
\[
\alpha_m(g_0)^{-1}\alpha_m(C_ig_0) := 
\left( \begin{array}{ccc}
 (\det(g_i)\det(\bar g_i))^{-1}& & \\
 & I_{m-2} & \\
 & & \det(g_i)\det(\bar g_i)\\ 
 \end{array} \right) \in {\SU}_{m}(F).
\]

Given $\sigma \in \Irr(M),$ fix a lift $\tsigma \in \Irr(\tM).$
Set $\tsigma \in \Irr(\tM)$ to be $\sigma_1 \otimes \sigma_2 \otimes \cdots \otimes \sigma_k \otimes \ttau,$ 
and $\ttau$ to be a lift in $\Irr(\tG_m)$ with $\tau \hookrightarrow \Res^{\tG_m}_{G_m} {\ttau}.$
Write $V_{\sigma}, V_{\tsigma_i},$ for $i=1, \dots, k,$ $V_{\ttau},$ and $V_\tau$ for the corresponding representation $\CC$-vector spaces.
Then, $V_\sigma$ is of the form
\[
V_{\tsigma_1} \otimes V_{\tsigma_2} \otimes\cdots \otimes V_{\tsigma_k} \otimes V_\tau.
\]
From \cite[p. 353]{go06}, for $(g_0, h)=(g_1,\dots g_i,\dots,g_j,\dots, g_k,h) \in (GL_{n_1}(E) \times GL_{n_2}(E) \times \cdots \times GL_{n_k}(E)) \rtimes G_m,$ the representation $\sigma$ acts on $(v_1, v_2, \cdots, v_k, v_0) \in V_\sigma$ as follows,
\begin{equation*} \label{any sigma equ}
\sigma(g_0,h)(v_1, v_2, \cdots, v_k, v_0) =  \sigma_1(g_1)(v_1) \otimes \sigma_2(g_2)(v_2) \otimes \cdots \otimes \sigma_k(g_k)(v_k) \otimes \ttau(\alpha_m(g_0)^{-1})\tau(h)(v_0).
\end{equation*}
From \eqref{action ij-s} and \cite[p. 353]{go06}, 
we have
\begin{align*} \label{action ij on rep}
(ij)\sigma(g_0,h) =
& \sigma_1(g_1) \otimes \cdots \otimes \sigma_j(g_i) \otimes \cdots \otimes \sigma_i(g_j)\cdots \otimes\sigma_k(g_k) \otimes \ttau(\alpha_m((ij)g_0)^{-1})\tau(\alpha_m(g_0)^{-1}\alpha_m((ij)g_0)h) \\
= & \sigma_1(g_1) \otimes \cdots \otimes \sigma_i(g_j) \otimes \cdots \otimes \sigma_j(g_i)\cdots \otimes\sigma_k(g_k) \otimes \ttau(\alpha_m(g_0)^{-1})\tau(h).
\end{align*}  
Likewise, using \eqref{action Ci},
we have
\begin{align*} 
C_i\sigma(g_0,h) =
& \sigma_1(g_1) \otimes \cdots \otimes \varepsilon(\sigma_i)(g_i) \otimes \cdots \otimes\sigma_k(g_k) \otimes \ttau(\alpha_m(C_ig_0)^{-1})\tau(\alpha_m(g_0)^{-1}\alpha_m(C_ig_0)h) \\
= & \sigma_1(g_1) \otimes \cdots \otimes \sigma_i(\varepsilon(g_i)) \otimes \cdots \otimes \sigma_k(g_k) \otimes \ttau(\alpha_m(g_0)^{-1})\tau(h).
\end{align*}
Therefore, $W_{M}$ acts non-trivially only on $\sigma_1,\sigma_2, \dots, \sigma_k,$ but trivially on $\tau.$
\section{Revisiting $R$--groups for $\U_n$ and their inner forms} \label{R-groups for U(n) and its inner forms}
Based on some known results in \cite{bangoldberg12, go95, kmsw14, mok13} regarding $R$-groups for $\U_n$ and its $F$-inner form, we discuss Arthur's conjecture for $\U_n$ and its inner forms, behavior of $R$-groups within $L$-packets of $\U_n$ and its inner forms, and behavior of $R$-groups between $\U_n$ and its inner forms.
\subsection{$L$-packets for $\U_n$ and its inner forms} \label{LLC for unitary}
Let $\tbG=\tbG_n$ denote the quasi-split unitary group $\U_n$ with respect to $E/F$ and $J_n,$ and $\tbM$ an $F$-Levi subgroup of $\tbG.$
For our purpose of studying $R$-groups, we focus on $\Phi_{\temp}(\tG).$
In \cite[Theorem 2.5.1.(b)]{mok13}, Mok generalized Rogawski's results (\cite{rog90}) in the case of unitary groups in three variables as follows.
There is a surjective finite-to-one map
\[
\Pi_{\temp}(\tG) \longrightarrow \Phi_{\temp}(\tG),
\]
and for $\tphi \in \Phi_{\temp} (\tG),$ the tempered $L$-packet $\Pi_{\tphi}(\tG)$ is constructed.
The same is true for an $F$-inner form $\tbG'$ of $\tbG$ by Kaletha-Minguez-Shin-White \cite[Section 1.6.1]{kmsw14}.

Let $\tbM$ and $\tbM'$ be $F$-Levi subgroups of $\tbG$ and $\tbG',$ respectively, which are $F$-inner forms of each other. Then, from \eqref{form of tbM} and \eqref{form of tbM'}, we recall
\[
\tbM \s {\Res}_{E/F} {\GL}_{n_1} \times \cdots \times {\Res}_{E/F} {\GL}_{n_k} \times \tbG_m,
\]
\[
\tbM' \s {\Res}_{E/F} {\GL}_{n_1} \times \cdots \times {\Res}_{E/F} {\GL}_{n_k} \times \tbG'_m,
\]
where $\tbG'_m$ is an $F$-inner form of $\tbG_m,$ $\sum_{i=1}^k 2n_i + m = n$ with $n_i \geq  0,$ and $m \geq 0.$
Let $\tphi \in \Phi_{\disc}(\tM)$ be given. 
Then, $\tphi$ is $\tbM$-relevant (see \cite[Section 8.2]{bo79}) and lies in $\Phi_{\disc}(\tM')$ as well.
We note that $\tphi$ is of the form $\tphi_1 \oplus \tphi_2 \oplus \cdots \oplus \tphi_k \oplus \tphi_-,$ 
where $\tphi_i \in \Phi_{\disc}(\GL_{n_i}(E))$ 
and $\tphi_- \in \Phi_{\disc}(\tG_m) = \Phi_{\disc}(\tG'_m).$
For each $\tphi_i,$ due to \cite{ht01, he00, scholze13}, we construct $L$-packets $\Pi_{\tphi_i}(\GL_{n_i}(E))$ consisting of discrete series representations of $\GL_{n_i}(E).$ 
Note that $\Pi_{\tphi_i}(\GL_{n_i}(E))$ is a singleton.
For $\tphi_-,$ due to \cite{kmsw14, mok13, rog90},
we construct $L$-packets $\Pi_{\tphi_-}(G_m)$ and $\Pi_{\tphi_-}(G'_m)$ consisting of discrete series representations of $G_m$ and $G'_m,$ respectively.
By taking the tensor product of members in packets for each $\tphi_i$ and $\tphi_-,$ we thus construct $L$-packets ${\Pi}_{\tphi}(\tM)$ of $\tM$ and ${\Pi}_{\tphi}(\tM')$ of $\tM',$ associated to the elliptic tempered $L$-parameter $\tphi.$
\subsection{Invariance of $R$-groups between $\U_n$ and its inner forms} \label{invariance for unitary}
\begin{thm} (Goldberg, \cite[Theorem 3.4]{go95}) \label{r-groups for tG}
Let $\tbM$ be an $F$-Levi subgroup of $\tbG.$
Given $\tsigma \s \tsigma_1 \otimes \tsigma_2 \otimes \cdots \otimes \tsigma_k \otimes \ttau \in \Pi_{\disc}(\tM),$ we have
\[
R_{\tsigma} \s \ZZ_2^d,
\]
where $d$ is the number of inequivalent $\tsigma
_i$ such that the induced representation
$\ii_{\tG_{n_i + m}, ~{\GL}_{n_i}(E)\times \tG_m} (\tsigma \otimes \ttau)$ is reducible. 
\qed
\end{thm}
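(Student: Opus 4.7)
The strategy is to identify $W(\tsigma)$ and $W^{\circ}_{\tsigma}$ separately, and then read off $R_{\tsigma} = W(\tsigma)/W^{\circ}_{\tsigma}$ using the Weyl-group description from Section~\ref{weyl group actions}.

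First, using the action formulas \eqref{action ij} and \eqref{action Ci-s} together with the description \eqref{desc WM}, I would identify the stabilizer $W(\tsigma)$ inside $W_{\tM} \subset S_k \ltimes \ZZ_2^k$ explicitly. A permutation $(ij)$ fixes $\tsigma$ precisely when $n_i = n_j$ and $\tsigma_i \simeq \tsigma_j$, and a block sign change $C_i$ fixes $\tsigma$ precisely when $\varepsilon(\tsigma_i) \simeq \tsigma_i$. Collecting these yields $W(\tsigma)$ as a direct product of wreath-type factors indexed by the equivalence classes of the $\tsigma_i$'s under the relation generated by $\tsigma_i \sim \tsigma_j$ and $\tsigma_i \sim \varepsilon(\tsigma_j)$.

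Second, I would compute $W^{\circ}_{\tsigma}$ from the Plancherel measures on reduced roots in $\Phi(P, A_{\tM})$. These roots split into two types: block-swap roots connecting two $\GL$-factors, and block-sign-change roots connecting a single $\GL_{n_i}$-factor with $\tG_m$. For the block-swap type, the Harish-Chandra theory for $\GL_{n_i + n_j}(E)$ gives the vanishing of $\mu_{\alpha}(\tsigma)$ precisely when $\tsigma_i \simeq \tsigma_j$, so every permutation element of $W(\tsigma)$ arises from a reflection in $\Delta'_{\tsigma}$ and lies in $W^{\circ}_{\tsigma}$. For the block-sign-change type, the Silberger-type Plancherel/reducibility dictionary identifies the vanishing of $\mu_{\beta_i}$ on $\tsigma_i \otimes \ttau$ with the reducibility condition on the rank-one induction $\ii_{\tG_{n_i+m}, \GL_{n_i}(E) \times \tG_m}(\tsigma_i \otimes \ttau)$, which determines whether the corresponding sign-change reflection lies in $W^{\circ}_{\tsigma}$. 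Taking the quotient, the entire $S_k$-part is annihilated and only classes of block sign changes survive; within each equivalence class of the $\tsigma_i$'s, permutation conjugation identifies the various $C_i$'s, collapsing the class to at most one $\ZZ_2$. Counting surviving classes by the stated reducibility condition yields $R_{\tsigma} \simeq \ZZ_2^d$.

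The main obstacle is the combinatorial bookkeeping for classes in which $\tsigma_i$ is paired with $\varepsilon(\tsigma_j)$ for some other $j$ but is not itself $\varepsilon$-self-dual: in such cases one must verify that the naive $\ZZ_2$'s coming from such a paired class are killed in the quotient via the interaction between permutations and sign changes in $W_{\tM}$, rather than contributing factors to $R_{\tsigma}$. Careful tracking of how $(ij) C_i (ij)^{-1}$ sits in $W^{\circ}_{\tsigma}$ when $\tsigma_i \simeq \tsigma_j$ or $\tsigma_i \simeq \varepsilon(\tsigma_j)$ is what ultimately pins down the count $d$ to exactly the inequivalent $\tsigma_i$'s for which the stated rank-one induction is reducible.
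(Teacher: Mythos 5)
This theorem is stated in the paper without proof---it is quoted directly from Goldberg \cite[Theorem 3.4]{go95}---so the only comparison available is with that reference, and your sketch is essentially Goldberg's original argument: identify $W(\tsigma)$ inside $S_k\ltimes\ZZ_2^k$ via the actions \eqref{action ij} and \eqref{action Ci-s}, annihilate the permutation part modulo $W^{\circ}_{\tsigma}$ using the irreducibility of representations induced from discrete series on $\GL_N(E)$ (this is the content of \cite[Lemma 3.2]{go95}, restated for the inner form as Corollary \ref{trivial on w} in this paper), and then decide which block sign changes survive via the rank-one Plancherel measures; your attention to the classes in which $\tsigma_i$ is linked to $\varepsilon(\tsigma_j)$ without being $\varepsilon$-self-dual is exactly the bookkeeping that lemma is designed to handle. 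One point should be stated with the correct orientation: Silberger's dictionary says $\mu_{\beta_i}(\tsigma)=0$ if and only if $C_i\tsigma\s\tsigma$ \emph{and} the rank-one induction from $\GL_{n_i}(E)\times\tG_m$ is \emph{irreducible}, so the sign-change reflection lies in $W^{\circ}_{\tsigma}$ in the irreducible case and survives to $R_{\tsigma}$ precisely in the reducible case; your phrase ``identifies the vanishing of $\mu_{\beta_i}$ \dots\ with the reducibility condition'' reads as the reverse implication, although the count of $d$ you extract at the end is consistent with the correct direction, so this is a matter of wording rather than a genuine gap.
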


\begin{thm} \label{iso bw R for tG}
Let $\tphi \in \Phi_{\disc}(\tM)$ be given. 
Under the identity \eqref{identity for Weyls}, for any $\tsigma \in \Pi_{\tphi}(\tM)$ and $\tsigma' \in \Pi_{\tphi}(\tM'),$ we have
\begin{equation*} \label{identity of W}
W(\tsigma) = W_{\tphi} = W(\tsigma').
\end{equation*}
and 
\begin{equation} \label{identity of W'}
W^{\circ}_{\tsigma} = W^{\circ}_{\tphi} = W^{\circ}_{\tsigma'}.
\end{equation}
Therefore, we have
\[
R_{\tsigma} \s R_{\tphi, \tsigma} \s R_{\tphi} \s R_{\tphi, \tsigma'} \s R_{\tsigma'}.
\]
\end{thm}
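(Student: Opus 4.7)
The plan is to identify $W(\tsigma)$, $W(\tsigma')$, and $W_\tphi$ with a single combinatorial subgroup of $W_\tM = W_M$ (via \eqref{identity for Weyls}), and analogously for the identity components $W^{\circ}_\tsigma$, $W^{\circ}_{\tsigma'}$, and $W^{\circ}_\tphi$; the five isomorphisms of the theorem then follow immediately from the definitions in Section \ref{section for def of R}, since equalities of stabilizers force equalities of the corresponding $R$-group quotients.

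First, I would describe $W_\tphi$ explicitly. Writing $\tphi = \tphi_1 \oplus \cdots \oplus \tphi_k \oplus \tphi_-$ with $\tphi_i \in \Phi_{\disc}(\GL_{n_i}(E))$ and $\tphi_- \in \Phi_{\disc}(\tG_m) = \Phi_{\disc}(\tG'_m)$, a direct calculation inside $\widehat{\tG} = \GL_n(\CC) \rtimes W_F$ shows that $C_\tphi(\widehat{\tG})^\circ$ is a central torus supported on the $\GL$-blocks, while the component group $C_\tphi(\widehat{\tG})/C_\tphi(\widehat{\tG})^\circ$ is generated by permutations of equivalent summands $\tphi_i \simeq \tphi_j$ together with involutions attached to blocks with $\varepsilon(\tphi_i) \simeq \tphi_i$. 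Transported through \eqref{identity for Weyls}, $W_\tphi$ becomes precisely the subgroup of $W_\tM$ generated by the corresponding transpositions $(ij)$ and sign-changes $C_i$ from Section \ref{WM-actions on Levis}.

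Next, I would invoke the actions recorded in Section \ref{WM-actions on rep of Levis}: $(ij)\tsigma \simeq \tsigma$ iff $\tsigma_i \simeq \tsigma_j$, and $C_i\tsigma \simeq \tsigma$ iff $\varepsilon(\tsigma_i) \simeq \tsigma_i$, while the action on the $\ttau$-factor is trivial. Because the local Langlands correspondence for $\GL_{n_i}(E)$ is a bijection, and because Mok \cite{mok13} (together with \cite{kmsw14} for the inner form $\tbG'$) provides LLC for $\tG_m$ and $\tG'_m$ compatible with the involutions $\varepsilon$, the $\tsigma$-level conditions translate verbatim into the $\tphi$-level conditions of the previous paragraph. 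This yields $W(\tsigma) = W_\tphi = W(\tsigma')$, independently of the choices of representatives $\tsigma \in \Pi_\tphi(\tM)$ and $\tsigma' \in \Pi_\tphi(\tM')$; in particular, the Weyl stabilizer is constant across the $L$-packet and invariant under passage to the inner form.

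The remaining step, $W^{\circ}_\tsigma = W^{\circ}_\tphi = W^{\circ}_{\tsigma'}$, is where I expect the main difficulty. By definition $W^{\circ}_\tsigma$ is generated by the reflections $s_\alpha$ with $\mu_\alpha(\tsigma) = 0$; Shahidi's formula expresses $\mu_\alpha$ via adjoint $L$- and $\gamma$-factors of the representation of $^L\tM$ on $\widehat{\mathfrak n}_\alpha$, and the compatibility of Mok's LLC with these factors converts the vanishing condition into the statement that $\alpha^\vee$ is a coroot of $C_\tphi(\widehat{\tG})^\circ$, which is the defining condition for $W^{\circ}_\tphi$; the analogous argument via \cite{kmsw14} gives $W^{\circ}_{\tsigma'} = W^{\circ}_\tphi$. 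Assembling everything, $W_{\tphi,\tsigma} = W_\tphi \cap W(\tsigma) = W_\tphi$ and $W^{\circ}_{\tphi,\tsigma} = W^{\circ}_\tphi$, hence $R_{\tphi,\tsigma} = W_\tphi/W^{\circ}_\tphi = R_\tphi = W(\tsigma)/W^{\circ}_\tsigma = R_\tsigma$, and the symmetric argument for $\tsigma'$ closes the chain $R_\tsigma \s R_{\tphi,\tsigma} \s R_\tphi \s R_{\tphi,\tsigma'} \s R_{\tsigma'}$.
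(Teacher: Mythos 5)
Your proposal is correct and follows essentially the same route as the paper: the paper's own proof is a one-line deferral to \cite[Lemma 11]{bangoldberg12} (maximal Levi case) and to \cite[Section 7.6]{mok13}, \cite[Section 4.6]{kmsw14} (general Levi), and the argument you sketch --- computing $W_{\tphi}$ from the decomposition $\tphi_1\oplus\cdots\oplus\tphi_k\oplus\tphi_-$, matching stabilizers via the bijectivity of LLC on the $\GL_{n_i}(E)$ factors and the trivial action on the $\ttau$-factor, and identifying $W^{\circ}_{\tsigma}$ with $W^{\circ}_{\tphi}$ through the compatibility of Plancherel measures with Mok's and KMSW's $\gamma$-factors --- is precisely the content of those cited results.
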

\begin{proof}
This is a consequence of \cite[Lemma 11]{bangoldberg12} for maximal Levi subgroups and \cite[Section 7.6]{mok13} and \cite[Section 4.6]{kmsw14} for general Levi subgroups.
\end{proof}
We have following corollaries to Theorems \ref{r-groups for tG} and \ref{iso bw R for tG}.
\begin{cor}
Let $\tphi \in \Phi_{\disc}(\tM)$ be given. 
For $\tsigma \in \Pi_{\tphi}(\tM)$ and $\tsigma' \in \Pi_{\tphi}(\tM'),$ we have $\ii_{\tG,\tM}(\tsigma)$ is irreducible if and only if $\ii_{\tG',\tM'}(\tsigma')$ is irreducible.
\qed
\end{cor}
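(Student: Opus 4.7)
The plan is to reduce the irreducibility of the induced representations to the triviality of their Knapp-Stein $R$-groups, and then appeal to Theorem \ref{iso bw R for tG}. By the Knapp-Stein theory as generalized by Silberger to arbitrary reductive $p$-adic groups (\cite{sil78, sil78cor}), for any discrete series representation $\tsigma$ of $\tM$ the induced representation $\ii_{\tG,\tM}(\tsigma)$ decomposes as a direct sum of $|R_{\tsigma}|$ mutually inequivalent irreducible tempered constituents; hence $\ii_{\tG,\tM}(\tsigma)$ is irreducible if and only if $R_{\tsigma}$ is trivial. Since $\tphi \in \Phi_{\disc}(\tM) = \Phi_{\disc}(\tM')$, the packet $\Pi_{\tphi}(\tM')$ also consists of discrete series representations, and exactly the same argument applied on the inner form side shows that $\ii_{\tG',\tM'}(\tsigma')$ is irreducible if and only if $R_{\tsigma'}$ is trivial.

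Next, I would invoke Theorem \ref{iso bw R for tG} to obtain the chain of isomorphisms $R_{\tsigma} \s R_{\tphi} \s R_{\tsigma'}$; in particular, $R_{\tsigma}$ is trivial if and only if $R_{\tsigma'}$ is trivial. Combining this observation with the preceding paragraph yields the asserted equivalence.

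The only mildly delicate point is verifying that Silberger's decomposition applies uniformly on the non quasi-split inner form $\tG'$, but this is built into the general framework of \cite{sil78}, which is formulated for arbitrary reductive $p$-adic groups. No new obstacle therefore arises beyond citing the existing Knapp-Stein theory; the substantive content has already been absorbed into Theorem \ref{iso bw R for tG}.
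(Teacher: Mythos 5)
Your argument is correct and is essentially the paper's: the corollary is stated there with no proof precisely because it follows from Theorem \ref{iso bw R for tG} combined with the Knapp--Stein dimension theorem $\dim C(\tsigma)=|R_{\tsigma}|$, so that irreducibility of $\ii_{\tG,\tM}(\tsigma)$ is equivalent to triviality of $R_{\tsigma}$, and likewise on the inner form side. One small caution: your claim that the induced representation splits into $|R_{\tsigma}|$ \emph{mutually inequivalent} irreducible constituents is not part of the general Silberger theory (it requires the $2$-cocycle to split and is only established later, in Proposition \ref{split 2-cocycle for G' bc} and its quasi-split analogue), but for this corollary you only need the dimension formula for the commuting algebra, which holds unconditionally.
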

\begin{cor} \label{r-groups for G'=bc}
Let $\tbM'$ be an $F$-Levi subgroup of $\tbG'.$
Given $\tsigma' \s \tsigma'_1 \otimes \tsigma'_2 \otimes \cdots \otimes \tsigma'_k \otimes \ttau' \in \Pi_{\disc}(\tM'),$ we have
\[
R_{\tsigma'} \s \ZZ_2^d,
\]
where $d$ is the number of inequivalent $\tsigma'
_i$ such that the induced representation
$\ii_{\tG'_{n_i + m}, ~{\GL}_{n_i}(E)\times \tG'_m} (\tsigma' \otimes \ttau')$ is reducible. 
\qed
\end{cor}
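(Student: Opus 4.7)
The plan is to deduce Corollary \ref{r-groups for G'=bc} by transferring both the $R$-group structure and the reducibility count from the quasi-split side to the non quasi-split inner form, using Theorems \ref{r-groups for tG} and \ref{iso bw R for tG} together with the first corollary to Theorem \ref{iso bw R for tG}.

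First, given $\tsigma' \simeq \tsigma'_1 \otimes \cdots \otimes \tsigma'_k \otimes \ttau' \in \Pi_{\disc}(\tM')$, let $\tphi \in \Phi_{\disc}(\tM')$ be an associated elliptic tempered $L$-parameter. Since $\tbM$ and $\tbM'$ share the same $L$-group, $\tphi$ also lies in $\Phi_{\disc}(\tM)$, and by the construction recalled in Section \ref{LLC for unitary} the packet $\Pi_{\tphi}(\tM)$ is non-empty. Choose any $\tsigma \simeq \tsigma_1 \otimes \cdots \otimes \tsigma_k \otimes \ttau \in \Pi_{\tphi}(\tM)$, compatible with the decomposition $\tphi = \tphi_1 \oplus \cdots \oplus \tphi_k \oplus \tphi_-$, where $\tsigma_i$ is the unique member of $\Pi_{\tphi_i}(\GL_{n_i}(E))$ and $\ttau \in \Pi_{\tphi_-}(\tG_m)$. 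Since $\tphi$ is elliptic, Theorem \ref{iso bw R for tG} yields $R_{\tsigma'} \simeq R_{\tsigma}$, so it suffices to show that $R_{\tsigma} \simeq \ZZ_2^{d'}$, where $d'$ is the count defined on the inner form side via $\tsigma'$.

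Apply Theorem \ref{r-groups for tG} to obtain $R_{\tsigma} \simeq \ZZ_2^d$, where $d$ is the number of inequivalent $\tsigma_i$ for which the maximal parabolic induction $\ii_{\tG_{n_i+m},\, \GL_{n_i}(E) \times \tG_m}(\tsigma_i \otimes \ttau)$ is reducible. The main step is then a componentwise comparison. For each $i$, the representation $\tsigma_i \otimes \ttau$ sits in the $L$-packet on the maximal $F$-Levi $\GL_{n_i} \times \tbG_m$ of $\tbG_{n_i+m}$ attached to $\tphi_i \oplus \tphi_-$, while its inner-form counterpart $\tsigma'_i \otimes \ttau'$ sits in the analogous packet for the $F$-inner form $\GL_{n_i} \times \tbG'_m$ of that Levi inside $\tbG'_{n_i+m}$. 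Applying the first corollary to Theorem \ref{iso bw R for tG} to this maximal parabolic pair, we conclude that $\ii_{\tG_{n_i+m}}(\tsigma_i \otimes \ttau)$ is reducible if and only if $\ii_{\tG'_{n_i+m}}(\tsigma'_i \otimes \ttau')$ is reducible.

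Finally, the equivalence relation on the indices matches as well: since the local Langlands correspondence for $\GL_{n_i}(E)$ is a bijection, $\tsigma_i \simeq \tsigma_j$ if and only if $\tphi_i \simeq \tphi_j$ if and only if $\tsigma'_i \simeq \tsigma'_j$. Combining this with the reducibility matching gives $d = d'$, and hence $R_{\tsigma'} \simeq R_{\tsigma} \simeq \ZZ_2^d$ with $d$ the desired count on the inner form side. The only mildly technical point is ensuring that the reducibility corollary to Theorem \ref{iso bw R for tG} applies uniformly to each maximal parabolic induction parameter, but since that corollary is stated in sufficient generality for inner-form Levi pairs of $\U_n$, no real obstacle arises.
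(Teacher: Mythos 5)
Your proposal is correct and follows essentially the route the paper intends: the corollary is stated with no written proof precisely because it is the combination of Theorem \ref{r-groups for tG}, the isomorphism $R_{\tsigma'} \s R_{\tsigma}$ from Theorem \ref{iso bw R for tG}, and the irreducibility-transfer corollary applied to the maximal parabolic pairs to match the count $d$. Your additional observations (that the $\GL_{n_i}(E)$ factors and their packet members are literally shared between $\tbM$ and $\tbM'$, so the inequivalence classes of the $\tsigma'_i$ match those of the $\tsigma_i$) are exactly the details the paper leaves implicit.
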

\begin{cor} \label{trivial on w}
Suppose $w \in R_{\tsigma'}$ and $w=sc,$ with $s\in S$ and $c\in\sC.$  Then $w=1.$
\end{cor}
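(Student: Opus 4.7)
The plan is to transport the structural description of $R_{\tsigma}$ in the quasi-split unitary case to the inner form $\tbG'$ via the chain of equalities in Theorem \ref{iso bw R for tG}, and then read off the decomposition in the semidirect product $W_{\tM'} = S \ltimes \sC$ supplied by \eqref{desc WM}. Fix a lift $\tsigma \in \Pi_{\tphi}(\tM)$ parametrized by the same elliptic tempered $L$-parameter $\tphi \in \Phi_{\disc}(\tM) = \Phi_{\disc}(\tM')$ as $\tsigma'$. Under the canonical identification \eqref{identity for Weyls}, Theorem \ref{iso bw R for tG} yields the equalities $W(\tsigma') = W_{\tphi} = W(\tsigma)$ and $W^{\circ}_{\tsigma'} = W^{\circ}_{\tphi} = W^{\circ}_{\tsigma}$ of literal subgroups of the common Weyl group $W_{\tM'} = W_{\tM}$. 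Taking the quotient, $R_{\tsigma'} = R_{\tsigma}$ as subgroups of $W_{\tM}$, not merely as abstract groups.

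Next, I invoke Theorem \ref{r-groups for tG}, which describes $R_{\tsigma}$ explicitly: it is generated inside $W_{\tM}$ by those block sign changes $C_i$ corresponding to indices $i$ for which the rank-one induced representation $\ii_{\tG_{n_i + m}, \GL_{n_i}(E) \times \tG_m}(\tsigma_i \otimes \ttau)$ is reducible. All of these generators lie in the pure sign-change subgroup $\sC \leq W_{\tM}$; in particular no nontrivial permutation in $S$ appears among them. Hence $R_{\tsigma} \subseteq \sC$, and by the identification in the previous paragraph, $R_{\tsigma'} \subseteq \sC$ as well.

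Finally, the uniqueness of the semidirect-product factorization $W_{\tM'} = S \ltimes \sC$ forces any $w \in R_{\tsigma'}$ written as $w = sc$ with $s \in S$ and $c \in \sC$ to have trivial permutation component $s = 1$. If $s$ is nontrivial this contradicts $w \in \sC$, so the only way such a decomposition is consistent with $w \in R_{\tsigma'}$ is $w = 1$, as claimed.

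The proof is essentially bookkeeping, and there is no serious obstacle beyond one subtlety worth flagging: Theorem \ref{iso bw R for tG} must be used in its strong form as an equality of subgroups of $W_{\tM}$ (by way of the common $W_{\tphi}$ sitting inside $W_{\tM} = W_{\tM'}$), rather than only as an abstract isomorphism of $R$-groups. Once that identification is invoked, the corollary reduces to the observation that the generators produced by Theorem \ref{r-groups for tG} are pure block sign changes.
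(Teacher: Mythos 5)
Your overall route---transferring the quasi-split statement to the inner form through the subgroup equalities of Theorem \ref{iso bw R for tG}---is exactly the paper's; its proof consists of citing \cite[Lemma 3.2]{go95} together with Theorem \ref{iso bw R for tG}. The identification of $R_{\tsigma'}$ with $R_{\tsigma}$ as literal subgroups of $W_{\tM}=W_{\tM'}$, which you rightly flag as requiring the strong (subgroup-equality) form of Theorem \ref{iso bw R for tG}, is fine: equality of the $W$'s and of the $W^{\circ}$'s pins down the same positivity conditions on both sides. The gap is in your second step. The statement of Theorem \ref{r-groups for tG} only records the abstract isomorphism $R_{\tsigma}\simeq\ZZ_2^d$; it does not say that $R_{\tsigma}$ is generated by the block sign changes $C_i$ inside $W_{\tM}$. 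An elementary abelian $2$-subgroup of $S\ltimes\sC$ need not lie in $\sC$ (a transposition already has order $2$), so ``$R_{\tsigma}\simeq\ZZ_2^d$'' alone does not yield $R_{\tsigma}\subseteq\sC$. The fact that elements of $R_{\tsigma}$ have trivial $S$-component is precisely \cite[Lemma 3.2]{go95}---the input the paper actually invokes---or, equivalently, the explicit description of the generators established in the proof of \cite[Theorem 3.4]{go95}. You need to quote that lemma (or that proof), not merely the isomorphism type recorded in Theorem \ref{r-groups for tG}.

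Separately, your closing inference is a non sequitur: from $w=sc\in\sC$ you get $s=1$ and $w=c$, not $w=1$. Indeed the conclusion ``$w=1$'' cannot hold for every $w\in R_{\tsigma'}$ admitting a decomposition $w=sc$ (every element of $W_{\tM'}$ has one, and $R_{\tsigma'}\simeq\ZZ_2^d$ is in general nontrivial by Corollary \ref{r-groups for G'=bc}); the substantive content, matching \cite[Lemma 3.2]{go95}, is that $s=1$, i.e.\ $R_{\tsigma'}\subseteq\sC$. Your argument, once repaired as above, proves exactly that; the final sentence forcing $w=1$ should be removed rather than defended.
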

\begin{proof}
This is a consequence of \cite[Lemma 3.2]{go95} and Theorem \ref{iso bw R for tG}.
\end{proof}

Let $\tsigma' \in \Pi_{\disc}(\tM')$ be given. Denote by $\CC[R(\tsigma')]_{\eta}$ the group algebra of $R(\tsigma')$ twisted by a $2$-cocycle $\eta,$ and by $C(\tsigma'),$ known as the commuting algebra of $\ii_{\tG',\tM'} (\tsigma'),$ the algebra ${\End}_{\tG'}(\ii_{\tG',\tM'} (\tsigma'))$ of $\tG'$-endomorphisms of $\ii_{\tG',\tM'} (\tsigma').$  
Then, we have 
\[
C(\tsigma') \s \CC[R(\tsigma')]_{\eta}
\]
as group algebras (see \cite{ks72, sil78, sil78cor}).
\begin{pro} \label{split 2-cocycle for G' bc}
With the above notation, we have 
\[
C(\tsigma') \s \CC[R_{\tsigma'}].
\]
\end{pro}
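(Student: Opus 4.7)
The plan is to exploit the explicit description $R_{\tsigma'} \simeq \ZZ_2^d$ given by Corollary \ref{r-groups for G'=bc} and show that the Knapp-Stein 2-cocycle $\eta$ splits by producing pairwise commuting involutive self-intertwiners for a set of generators. By that corollary, $R_{\tsigma'}$ is generated by commuting block sign changes $C_{i_1}, \ldots, C_{i_d}$, one for each tensor factor $\tsigma'_{i_j}$ at which the rank-one induced representation $\ii_{\tG'_{n_{i_j}+m},\, \GL_{n_{i_j}}(E)\times \tG'_m}(\tsigma'_{i_j} \otimes \ttau')$ is reducible. Since $H^2(\ZZ_2^d, \CC^\times)$ vanishes as soon as one can exhibit commuting involutive lifts of the generators, it suffices to check this on each $C_{i_j}$ individually.

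First, because each $C_i$ acts only on the $i$-th tensor factor of $\tsigma' \simeq \tsigma'_1 \otimes \cdots \otimes \tsigma'_k \otimes \ttau'$ (as recorded in Section \ref{WM-actions on rep of Levis}), the standard intertwining operator $A(C_i, \tsigma')$ decomposes as the tensor product of the identity on the remaining factors with the rank-one self-intertwiner associated to the induction from $\GL_{n_i}(E) \times \tG'_m$ to $\tG'_{n_i + m}$. Thus for $i \neq j$ the operators $A(C_i, \tsigma')$ and $A(C_j, \tsigma')$ act on disjoint tensor factors and commute on the nose, so no cocycle contribution can arise between distinct generators.

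Second, for each $C_{i_j}$, one normalizes the rank-one intertwiner with respect to the Whittaker datum of the quasi-split $\tG_m$ transported to $\tG'_m$ through the inner twist (as in \cite{kmsw14}), and shows that $A(C_{i_j}, \tsigma')^2 = 1$. This is the inner-form analogue of the splitting of the $R$-group cocycle established by Goldberg in \cite{go95} for $\U_n$; one can either re-run the rank-one harmonic analytic computation directly on $\tG'_{n_{i_j}+m}$, or import the splitting from $\tsigma$ to $\tsigma'$ via the isomorphism $R_{\tsigma} \simeq R_{\tsigma'}$ of Theorem \ref{iso bw R for tG} together with the endoscopic compatibility of normalized intertwining operators in \cite{kmsw14, mok13}.

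The main obstacle is verifying that the normalizations of the $A(C_{i_j}, \tsigma')$ are mutually compatible, so that $w \mapsto A(w, \tsigma')$ is a genuine homomorphism of $R_{\tsigma'}$ rather than only a projective one. The disjoint-factor decomposition from the first step prevents any scalar ambiguity from appearing in products $A(C_i)A(C_j)$, and the canonicity of the Whittaker normalization fixes the scalar in $A(C_i)^2$. Combining these two ingredients, every $w \in R_{\tsigma'}$ lifts canonically to a self-intertwiner, so $\eta$ is cohomologically trivial and $C(\tsigma') \simeq \CC[R_{\tsigma'}]$ follows.
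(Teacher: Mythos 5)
Your proposal is correct and takes essentially the same route as the paper, whose proof is a one-line deferral to \cite[Proposition 4.1]{go95}: the generators of $R_{\tsigma'}\s\ZZ_2^d$ are pure block sign changes, their intertwining operators live on disjoint tensor factors and therefore commute, and the squares are scalars that can be normalized to $1$, so the cocycle splits. The only minor point is that the identification of the generators as sign changes with no permutation part is the content of Corollary \ref{trivial on w} (not just Corollary \ref{r-groups for G'=bc}), which is exactly why the paper cites both.
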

\begin{proof}
The proof is similar to \cite[Proposition 4.1]{go95}, due to Theorem \ref{iso bw R for tG} and Corollaries \ref{r-groups for G'=bc} and \ref{trivial on w}.
\end{proof}
As consequence of Proposition \ref{split 2-cocycle for G' bc}, we have the following corollary.
\begin{cor}
Let $\tsigma' \in \Pi_{\disc}(\tM')$ be given. Then, each constituent of $\ii_{\tG',\tM'}(\tsigma')$ appears with multiplicity one.
\qed
\end{cor}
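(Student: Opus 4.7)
The plan is to deduce the multiplicity-one statement directly from the structure of the commuting algebra. By the general theory of Knapp--Stein \cite{ks72, sil78, sil78cor}, the decomposition of $\ii_{\tG',\tM'}(\tsigma')$ into irreducible constituents is governed by the commuting algebra $C(\tsigma')$: the irreducible summands are in bijection with the irreducible $C(\tsigma')$-modules, and the multiplicity with which a given irreducible constituent occurs equals the dimension of the associated irreducible $C(\tsigma')$-module.

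Given this correspondence, my strategy would be to reduce the multiplicity-one claim to the statement that every irreducible module of $C(\tsigma')$ is one-dimensional. First, I would invoke Proposition \ref{split 2-cocycle for G' bc} to replace $C(\tsigma')$ by the \emph{untwisted} group algebra $\CC[R_{\tsigma'}]$, so the cocycle $\eta$ poses no obstruction. Next, I would appeal to Corollary \ref{r-groups for G'=bc}, which identifies $R_{\tsigma'} \s \ZZ_2^d$ as an elementary abelian $2$-group; in particular $R_{\tsigma'}$ is abelian, so every irreducible complex representation of $R_{\tsigma'}$ has dimension one.

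Combining these two facts, every irreducible $\CC[R_{\tsigma'}]$-module is one-dimensional, and therefore each constituent of $\ii_{\tG',\tM'}(\tsigma')$ occurs with multiplicity one. There is no real obstacle here; the argument is a direct bookkeeping step that assembles Proposition \ref{split 2-cocycle for G' bc} with the abelian structure of $R_{\tsigma'}$ from Corollary \ref{r-groups for G'=bc}, and the standard dictionary between the commuting algebra and the decomposition of the induced representation.
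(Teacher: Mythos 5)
Your argument is correct and is exactly the one the paper intends: the corollary is stated as an immediate consequence of Proposition \ref{split 2-cocycle for G' bc}, namely that $C(\tsigma')\simeq\CC[R_{\tsigma'}]$ with trivial cocycle, combined with $R_{\tsigma'}\simeq\ZZ_2^d$ being abelian so that every irreducible module of the commuting algebra is one-dimensional. You have simply made explicit the standard dictionary between the commuting algebra and multiplicities that the paper leaves implicit.
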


\begin{pro} \label{analogue of go95 thm4.3}
Let $\tsigma' \in \Pi_{\disc}(\tM')$ be given. Then, $\ii_{\tG', \tM'}(\tsigma')$ has an elliptic constituent if and only if all constituents of $\ii_{\tG',\tM'}(\tsigma')$ are elliptic if and only if 
\[
R_{\tsigma'} \s \ZZ_2^k.
\]
\end{pro}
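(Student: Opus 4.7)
The plan is to invoke Arthur's characterization of elliptic tempered constituents of parabolically induced representations in terms of the Knapp-Stein $R$-group, and then to read off when the criterion is met from the structural results for $R_{\tsigma'}$ established above. For $\tbG' = \tbG'_n$, the center $Z(\tbG')$ is the anisotropic torus $\U_1$, so the maximal $F$-split central torus of $\tbG'$ is trivial and $\ma_{\tG'} = 0$. The split component $\bA_{\tbM'}$ has $F$-rank $k$, the number of $\GL$-factors in $\tbM'$ (see \eqref{type of AtM}), and under $W_{\tM'} \subset S_k \ltimes \ZZ_2^k$ an element $w$ acts on the real cocharacter space $\ma_{\tM'}$ as the corresponding signed permutation.

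Combining Corollary \ref{trivial on w} with Corollary \ref{r-groups for G'=bc}, one has $R_{\tsigma'} \subset \ZZ_2^k$ and $R_{\tsigma'} \s \ZZ_2^d$, where $R_{\tsigma'}$ is generated precisely by those block sign-changes $C_i$ for which $\ii_{\tG'_{n_i+m},\,\GL_{n_i}(E)\times\tG'_m}(\tsigma'_i\otimes\ttau')$ reduces. An element $w = \prod_{i \in S} C_i \in \ZZ_2^k$ has fixed subspace $\ma_{\tM'}^w$ of codimension $|S|$, so $w$ is regular (i.e., $\ma_{\tM'}^w = \ma_{\tG'} = 0$) iff $S = \{1, \ldots, k\}$, that is, iff $w = C_1 C_2 \cdots C_k$. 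This unique regular element lies in $R_{\tsigma'}$ iff $d = k$, equivalently iff $R_{\tsigma'} \s \ZZ_2^k$.

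The final input is Arthur's theorem on elliptic tempered characters (see Arthur, \emph{On Elliptic Tempered Characters}, Acta Math.\ 171 (1993), Thm.~6.2): since $R_{\tsigma'}$ is abelian with split $2$-cocycle by Proposition \ref{split 2-cocycle for G' bc}, the constituents of $\ii_{\tG',\tM'}(\tsigma')$ are indexed by characters $\chi \in \widehat{R_{\tsigma'}}$ with multiplicity one, and the elliptic inner product satisfies $\langle \pi_\chi, \pi_\chi \rangle_{\mathrm{ell}} = |R_{\tsigma'}^{\mathrm{reg}}|/|R_{\tsigma'}|$, independently of $\chi$. Hence either every $\pi_\chi$ is elliptic (when $R_{\tsigma'}^{\mathrm{reg}} \neq \emptyset$) or none is, which coupled with the previous paragraph yields the three-way equivalence.

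The step I expect to be the main obstacle is packaging the ``all-or-nothing'' dichotomy cleanly from Arthur's elliptic pairing; once abelianness and the splitting of the cocycle are in hand, this reduces to Fourier analysis on $R_{\tsigma'}$, and the remaining content is the linear-algebra identification of regular elements in the sign-change subgroup performed above. The overall strategy parallels Goldberg's proof of \cite[Theorem 4.3]{go95} for $\tbG$, with the inputs on the non quasi-split side supplied by Theorem \ref{iso bw R for tG}, Corollary \ref{r-groups for G'=bc}, and Proposition \ref{split 2-cocycle for G' bc}.
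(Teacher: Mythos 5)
Your argument is correct and is essentially the paper's: the authors simply cite the proof of \cite[Theorem 4.3]{go95}, whose content is exactly the combination you spell out — Arthur's regular-element criterion for ellipticity, the identification of $C_1\cdots C_k$ as the only candidate regular element once Corollary \ref{trivial on w} forces $R_{\tsigma'}\subset\ZZ_2^k$, and the split cocycle (Proposition \ref{split 2-cocycle for G' bc}) giving the multiplicity-one, all-or-nothing dichotomy over characters of $R_{\tsigma'}$. One cosmetic caveat: Arthur's elliptic inner product carries $|\det(w-1)|$ factors, so its value is not literally $|R_{\tsigma'}^{\mathrm{reg}}|/|R_{\tsigma'}|$, but its nonvanishing — which is all you use — is unaffected.
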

\begin{proof}
The proof is similar to \cite[Theorem 4.3]{go95}, due to Corollary \ref{trivial on w} and Proposition \ref{split 2-cocycle for G' bc}.
\end{proof}
\begin{cor}
Let $\tphi \in \Phi_{\disc}(\tM)$ be given. 
For any $\tsigma \in \Pi_{\tphi}(\tM)$ and $\tsigma' \in \Pi_{\tphi}(\tM'),$  
there is an elliptic constituent in $\ii_{\tG, \tM}(\tsigma)$ 
if and only if 
there is an elliptic constituent in $\ii_{\tG', \tM'}(\tsigma').$
\end{cor}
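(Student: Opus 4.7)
The plan is to deduce this corollary immediately from the two characterizations of ellipticity in terms of the Knapp-Stein $R$-group on each side, combined with the invariance of $R$-groups along an $L$-packet and between inner forms that was established just above.

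First, I would recall the quasi-split analogue: by \cite[Theorem 4.3]{go95}, for $\tsigma \in \Pi_{\disc}(\tM)$ the induced representation $\ii_{\tG,\tM}(\tsigma)$ has an elliptic constituent if and only if all of its constituents are elliptic, if and only if $R_{\tsigma} \simeq \ZZ_{2}^{k}$ (where $k$ is the number of $\mathrm{Res}_{E/F}\GL_{n_i}$ factors of $\tbM$). Second, the non quasi-split counterpart is exactly Proposition \ref{analogue of go95 thm4.3}: $\ii_{\tG',\tM'}(\tsigma')$ has an elliptic constituent if and only if $R_{\tsigma'} \simeq \ZZ_{2}^{k}.$

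Next, I would invoke Theorem \ref{iso bw R for tG}, which gives the chain
\[
R_{\tsigma} \simeq R_{\tphi,\tsigma} \simeq R_{\tphi} \simeq R_{\tphi,\tsigma'} \simeq R_{\tsigma'}
\]
for any $\tsigma \in \Pi_{\tphi}(\tM)$ and $\tsigma' \in \Pi_{\tphi}(\tM').$ In particular, $R_{\tsigma} \simeq \ZZ_{2}^{k}$ holds if and only if $R_{\tsigma'} \simeq \ZZ_{2}^{k}.$ Combining the three displayed equivalences yields the asserted bijectivity of the ellipticity condition between $\tG$ and $\tG'.$

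There is no real obstacle here; the content has already been absorbed into Theorem \ref{iso bw R for tG} and Proposition \ref{analogue of go95 thre40s3}. The only minor point to check is that the integer $k$ (the number of general linear factors of the Levi) is the same on both sides, which is immediate from the parametrization \eqref{form of tbM} and \eqref{form of tbM'} of $\tbM$ and its inner form $\tbM'.$ Thus the corollary follows in a single line once the three cited results are in hand.
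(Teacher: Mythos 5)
Your proposal is correct and follows essentially the same route as the paper, whose proof simply cites \cite[Theorem 4.3]{go95} together with Proposition \ref{analogue of go95 thm4.3}; you have merely made explicit the intermediate use of Theorem \ref{iso bw R for tG} to transfer the condition $R_{\tsigma}\simeq\ZZ_2^k$ to $R_{\tsigma'}$, which the paper leaves implicit. No gaps.
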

\begin{proof}
This is a consequence of \cite[Theorem 4.3]{go95} and Proposition \ref{analogue of go95 thm4.3}.
\end{proof}
\section{Behaviour of $R$--groups for $\SU_n$ and its inner forms} \label{R-groups for SU(n) and its inner forms}
In this section, we prove Arthur's conjecture for both quasi-split special unitary groups $\SU_n$ and its inner forms.
Furthermore, we study the behavior of $R$-groups within $L$-packets and between inner forms of $\SU_n.$
We will use the notation in Sections \ref{structure}, \ref{weyl group actions}, and \ref{R-groups for U(n) and its inner forms}.
\subsection{$L$-packets for $\SU_n$ and its inner forms} \label{L-packets}
We discuss tempered $L$-packets of $\bG=\SU_n$ and its $F$-inner form $\bG'.$
It is natural to construct $L$-packets for $\bG$ and $\bG',$ 
by restricting $L$-packets for $\tbG=\U_n$ and its $F$-inner form $\tbG'$ 
which has been done by Rogawski \cite{rog90}, Mok \cite{mok13}, and Kaletha-Minguez-Shin-White \cite{kmsw14} (see Section \ref{LLC for unitary}).
Thus, given $\phi \in \Phi(G),$ from \cite[Th\'{e}or\`{e}m 8.1]{la85}, there exists a lifting $\tphi \in \Phi(\tG)$ such that
\[
\phi = \tphi \circ pr,
\]
where $pr$ is the projection  $\widehat{\tG} \twoheadrightarrow \widehat{G}$ in \eqref{exact of L-gp of G tG}.
Note that  the homomorphism $pr$ is compatible with $\Gamma$-actions on $\widehat{\tG}$ and $\widehat{G}$ (see Section \ref{L-groups}) and the lifting $\tphi \in \Phi(\tG)$ is chosen uniquely up to a $1$-cocycle of $W_F$ in $\widehat{(\tG/G)}$ (see \cite[Section 7]{la85} and \cite[Theorem 3.5.1]{chaoli}). 

For our purpose, we are interested in $\phi \in \Phi_{\temp}(G)$ and the lifting $\tphi$ lies in $\Phi_{\temp}(\tG).$
Using the $L$-packet $\Pi_{\tphi}(\tG)$ for $\tphi \in \Phi_{\temp}(\tG)$ in Section \ref{LLC for unitary}, we construct an $L$-packet  $\Pi_{\phi}(G)$ for $\phi \in \Phi_{\temp}(G)$ as the set of isomorphism classes of irreducible constituents in the restriction from $\tG$ to $G$ as follows:
\[
\Pi_{\phi}(G) := \{ \sigma \hookrightarrow {\Res}^{\tG}_{G}(\tsigma),~~ \tsigma \in \Pi_{\tphi}(\tG) \} / \s.
\]
Likewise, given $\phi \in \Phi_{\disc}(M),$ we construct an $L$-packet $\Pi_{\phi}(M)$ for $\phi \in \Phi_{\disc}(M)$ as follows:
\[
\Pi_{\phi}(M) := \{ \sigma \hookrightarrow {\Res}^{\tM}_{M}(\tsigma),~~ \tsigma \in \Pi_{\tphi}(\tM) \} / \s,
\]
where $\tphi$ lies in $\Phi_{\disc}(\tM)$ such that $
\phi = \tphi \circ pr,$ with the projection $pr: \widehat{\tM} \twoheadrightarrow \widehat{M}$ in \eqref{exact of L-gp of M tM}.

All the above arguments apply verbatim to $F$-inner forms $\tbG'$ and their $F$-Levi subgroups $\tbM'.$
\subsection{Arthur conjecture for $\SU_n$ and its inner forms} \label{arthur conj}
The purpose of the section is to prove Arthur's conjecture, predicted in \cite{art89ast}, for $\bG$ and $\bG'.$ 
\begin{thm}  \label{arthur conj}
Given $\phi \in \Phi_{\disc}(M),$ $\sigma \in \Pi_{\phi}(M),$ and $\sigma' \in \Pi_{\phi}(M'),$ we have
\[
R_{\sigma} \s R_{\phi, \sigma} ~~ \text{and} ~~R_{\sigma'} \s R_{\phi, \sigma'}.
\]
\end{thm}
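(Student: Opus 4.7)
The plan is to transfer Theorem \ref{iso bw R for tG}, the already established Arthur conjecture for $\tbG$ and $\tbG'$, down to $\bG$ and $\bG'$ through the restriction construction of $\Pi_\phi(M)$ given in Section \ref{L-packets}. Fix a lift $\tphi \in \Phi_{\disc}(\tM)$ of $\phi$ along the projection $pr : \widehat{\tM} \twoheadrightarrow \widehat{M}$ in \eqref{exact of L-gp of M tM}, and choose $\tsigma \in \Pi_{\tphi}(\tM)$ with $\sigma \hookrightarrow \Res^{\tM}_{M} \tsigma$. Because $\bA_{\bM} \s \bA_{\tbM}$ as $F$-tori (Section \ref{structure for SUn}), the identification \eqref{identity for Weyls} places $W_M = W_{\tM}$, and the reduced root systems agree, so $W(\sigma)$, $W_{\phi}$, $W^{\circ}_{\sigma}$ and $W^{\circ}_{\phi}$ all sit inside a common ambient Weyl group.

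The first main step is the equality $W_{\phi,\sigma} = W(\sigma)$. By Theorem \ref{equality on W},
\[
W(\sigma) \;\s\; \{ w \in W_M : {^w}\tsigma \s \tsigma \lambda \text{ for some } \lambda \in (\tM/M)^D \},
\]
so it suffices to prove that this right-hand set equals $W_{\phi}$. Given $w \in W_\phi$, lift a representative $n \in N_{C_\phi(\widehat G)}(T_\phi)$ to some $\tilde n \in \widehat{\tG}$ along \eqref{exact of L-gp of G tG}. Since $n$ centralizes the image of $\phi$, the assignment $u \mapsto \tilde n \, \tphi(u) \, \tilde n^{-1} \tphi(u)^{-1}$ from $W_F$ to $\widehat{\tG}$ lands in $\ker(pr) = \widehat{\U_1} = \CC^{\times}$ and is a continuous $1$-cocycle, hence corresponds via local class field theory and \cite[Th\'eor\`eme 8.1]{la85} to a character $\lambda \in (\tM/M)^D$ such that ${^w}\tphi \sim \tphi \cdot \lambda$. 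LLC for $\tM$ in \cite{mok13, kmsw14} then translates this into ${^w}\tsigma \s \tsigma \lambda$, placing $w$ in the right-hand set. The reverse inclusion runs the same argument backwards: any $\lambda \in (\tM/M)^D$ realizing ${^w}\tsigma \s \tsigma \lambda$ lifts via LLC to a $1$-cocycle in $\widehat{\U_1}$, producing a $\tilde n \in \widehat{\tG}$ whose image $n$ lies in $C_\phi(\widehat G) \cap N_{\widehat G}(T_\phi)$. Hence $W_\phi = W(\sigma)$, and $W_{\phi,\sigma} := W_\phi \cap W(\sigma) = W(\sigma)$.

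For the identity components, I invoke three inputs. Theorem \ref{iso bw R for tG} gives $W^{\circ}_{\tsigma} = W^{\circ}_{\tphi}$. The rank-one Harish-Chandra Plancherel measure $\mu_\alpha$ is preserved under restriction from $\tM$ to $M$, so $\Delta'_\sigma = \Delta'_{\tsigma}$ and therefore $W^{\circ}_{\sigma} = W^{\circ}_{\tsigma}$. On the dual side, Lemma \ref{lm for identity comp} identifies $C_\phi(\widehat G)^\circ$ with the image of $C_{\tphi}(\widehat \tG)^\circ$ under $pr$, while Lemma \ref{gamma-inv of centers} identifies $T_\phi$ with $T_{\tphi}/Z(\widehat \tG)^{\Gamma}$, jointly yielding $W^{\circ}_{\phi} = W^{\circ}_{\tphi}$. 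Chaining these equalities gives $W^{\circ}_{\sigma} = W^{\circ}_{\phi}$, and since $W^{\circ}_{\phi} \subseteq W(\sigma)$, we obtain $W^{\circ}_{\phi,\sigma} = W^{\circ}_{\phi} \cap W(\sigma) = W^{\circ}_{\sigma}$. Taking quotients of the two equalities just established yields $R_{\sigma} \s R_{\phi,\sigma}$.

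The statement for $\sigma' \in \Pi_\phi(M')$ is proved identically, using a lift $\tsigma' \in \Pi_{\tphi}(\tM')$ with $\sigma' \hookrightarrow \Res^{\tM'}_{M'} \tsigma'$ and invoking the non quasi-split inner-form results from Section \ref{R-groups for U(n) and its inner forms} (in particular Corollary \ref{r-groups for G'=bc}) in place of the quasi-split facts. The main technical obstacle is the identification $W_\phi = W(\sigma)$ in the first step: one must match the $(\tM/M)^D$-twist ambiguity for $\tsigma$ appearing in Theorem \ref{equality on W} with the $Z(\widehat \tG)$-valued $1$-cocycle ambiguity for $\tphi$ of \cite[Section 7]{la85}, and verify that this matching is independent of the choices of the lifts $\tilde n$ and $\tsigma$.
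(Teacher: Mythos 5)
Your overall architecture matches the paper's: the paper also reduces the theorem to the two equalities $W_{\phi,\sigma}=W(\sigma)$ and $W^{\circ}_{\phi,\sigma}=W^{\circ}_{\sigma}$, and proves the second one by exactly your chain $W^{\circ}_{\sigma}=W^{\circ}_{\tsigma}=W^{\circ}_{\tphi}=W^{\circ}_{\phi}$, using compatibility of Plancherel measures with restriction, the identity \eqref{identity of W'} from Theorem \ref{iso bw R for tG}, and then Lemmas \ref{gamma-inv of centers} and \ref{lm for identity comp} to identify $N_{\bar C_{\tphi}^\circ}(\bar T_{\tphi})$ with $N_{C_{\phi}(\widehat{G})^\circ}(T_{\phi})$. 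That part of your proposal is the paper's argument.

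The problem is your first step. You claim the full equality $W_{\phi}=W(\sigma)$ and, in particular, the inclusion $W_{\phi}\subseteq\{w: {^w}\tsigma\s\tsigma\lambda\}$ via ``LLC for $\tM$ then translates ${^w}\tphi\sim\tphi\lambda$ into ${^w}\tsigma\s\tsigma\lambda$.'' This translation is not valid: the correspondence for $\tM$ only tells you that ${^w}\tsigma$ and $\tsigma\lambda$ lie in the same packet $\Pi_{\tphi\lambda}(\tM)$, and this packet is not a singleton in general, because the factor $\Pi_{\tphi_-}(\tG_m)$ for the unitary block $\U_m$ can have several members. Concretely, $w$ acts trivially on the $\tG_m$-component, so ${^w}\tsigma\s\tsigma\lambda$ forces $\ttau\s\ttau\lambda$, whereas $w\in W_{\phi}$ only forces $\tphi_-\sim\tphi_-\lambda$, i.e.\ that $\ttau\lambda$ lies in the packet of $\ttau$; these conditions genuinely differ. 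This is precisely why the theorem asserts $R_{\sigma}\s R_{\phi,\sigma}$ and not $R_{\sigma}\s R_{\phi}$ (contrast with Theorem \ref{iso bw R for tG}, where all three $R$-groups coincide for $\U_n$). Fortunately the false equality is not needed: since $W_{\phi,\sigma}:=W_{\phi}\cap W(\sigma)$, the identity $W_{\phi,\sigma}=W(\sigma)$ requires only the single inclusion $W(\sigma)\subseteq W_{\phi}$, which is the ``reverse'' direction of your argument (if ${^w}\sigma\s\sigma$ then ${^w}\phi\sim\phi$, so $w\in W_{\phi}$), and is all the paper itself invokes at \eqref{reduction for W}. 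So your conclusion stands, but you should delete the claim $W_{\phi}=W(\sigma)$ and the appeal to Theorem \ref{equality on W} here — that theorem is used in the paper for the invariance results of Section \ref{behavior of Rgps}, not for Theorem \ref{arthur conj}.
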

The rest of the section is devoted to the proof of Theorem \ref{arthur conj}.
Since all the following techniques apply to both $\bM$ and $\bM',$ we shall state the proof for $\bM.$

Let $\phi \in \Phi_{\disc}(M)$ and $\sigma \in \Pi_{\phi}(M)$ be given.
Identifying $W_\phi$ with a subgroup of $W_M$ (see Section \ref{section for def of R}), we note that $W(\sigma) \subset W_\phi,$ which implies that
\begin{equation} \label{reduction for W}
W(\sigma) = W_{\sigma, \phi} = W_\phi \cap W(\sigma).
\end{equation}

Denote by $\tphi \in \Phi_{\disc}(\tM)$ a lifting of $\phi$ as in Section \ref{L-packets}.  
For any $\tsigma \in \Phi_{\tphi}(\tM),$ we have
\[
W^{\circ}_{\tsigma} = W^{\circ}_{\sigma},
\]
since the Plancherel measure is compatible with restriction (see \cite[Proposition 2.4]{choiy1}, \cite[Lemma 2.3]{go06}, for example) and $\Phi(P, A_M) = \Phi(\tP, A_{\tM})$ (see Section \ref{basic notation}).
Using \eqref{identity of W'}, we have
\begin{equation} \label{reduction for W'}
W^{\circ}_{\sigma} = W^{\circ}_{\tphi}.
\end{equation}
To prove Theorem \ref{arthur conj}, from \eqref{reduction for W} and \eqref{reduction for W'}, it is thus enough to show that
\begin{equation} \label{last arg for Arthur conj}
W^{\circ}_{\tphi} = W^{\circ}_{\phi}.
\end{equation}
Indeed, if so, then we have
\[
W^\circ_{\sigma, \phi} \overset{\text{definition}}{=} W^\circ_{\phi} \cap W(\sigma) \overset{\eqref{last arg for Arthur conj}}{=} W^\circ_{\tphi} \cap W(\sigma) \overset{\eqref{reduction for W'}}{=} W^\circ_{\sigma} \cap W(\sigma) = W^\circ_\sigma.
\]

In what follows, we prove \eqref{last arg for Arthur conj}.
From \cite[Chapter 3.4]{mok13} we set a maximal torus $T_{\tphi}$ in $C_{\tphi}(\widehat{\tG})^{\circ}$ to be the identity component
\[
A_{\widehat \tM} = (Z(\widehat{\tM})^{\Gamma})^{\circ}
\]
of the $\Gamma$--invariants of the center $Z(\widehat{\tM}).$ 
So, we have $\widehat \tM = Z_{\widehat \tG}(T_{\tphi}).$ 
Likewise, we set $T_{\phi}= A_{\widehat M} \subset C_{\phi}(\widehat{G})^{\circ},$ and  $\widehat M = Z_{\widehat G}(T_{\phi}).$
Here, we put
\[
\bar C_{\tphi}^\circ := C_{\tphi}(\widehat{\tG})^\circ/Z(\widehat \tG)^{\Gamma}, ~~ \bar T_{\tphi} := T_{\tphi}/Z(\widehat \tG)^{\Gamma}.
\]
Note that $\bar C_{\tphi}^\circ \subset C_{\phi}(\widehat{G})^\circ \subset \widehat G$ and $\bar T_{\tphi} \subset T_{\phi} \subset \widehat M.$ 
Using an equality in \cite[p.64]{mok13}, we have
\[
W_{\tphi}^\circ \overset{\text{definition}}{=} N_{C_{\tphi}(\widehat{\tG})^\circ}(T_{\tphi}) = N_{\bar C_{\tphi}^\circ}(\bar T_{\tphi}).
\]
Thus, it suffices to show that 
\[
N_{\bar C_{\tphi}^\circ}(\bar T_{\tphi}) = N_{C_{\phi}(\widehat{G})^\circ}(T_{\phi}).
\]

With above notation, the following lemma holds.
\begin{lm} \label{lm for identity comp}
\[
\bar C_{\tphi}^\circ =  C_{\phi}(\widehat{G})^\circ
\]
\end{lm}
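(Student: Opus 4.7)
My plan is to identify $\bar C_{\tphi}^{\circ}$ and $C_{\phi}(\widehat G)^{\circ}$ as connected algebraic subgroups of $\widehat G$, verify the easy inclusion, and then force equality by matching dimensions via a Lie-algebra calculation.

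First I would check the inclusion $\bar C_{\tphi}^{\circ} \subset C_{\phi}(\widehat G)^{\circ}$. Since $\phi = pr \circ \tphi$ with $pr : \widehat{\tG} \twoheadrightarrow \widehat G$ the surjection of \eqref{exact of L-gp of G tG}, any $\tilde c \in C_{\tphi}(\widehat{\tG})^{\circ}$ commutes with the image of $\tphi$, so $pr(\tilde c)$ commutes with the image of $\phi$. The kernel of $pr$ meets $C_{\tphi}(\widehat{\tG})$ in $Z(\widehat{\tG})^{\Gamma} = \{\pm 1\}$ by \eqref{gamma invariants}, so $pr$ identifies $\bar C_{\tphi}^{\circ}$ with $pr(C_{\tphi}(\widehat{\tG})^{\circ})$, which is connected and therefore lies in $C_{\phi}(\widehat G)^{\circ}$.

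The main step is to verify $\dim C_{\tphi}(\widehat{\tG}) = \dim C_{\phi}(\widehat G)$, which I would carry out on Lie algebras. Set $\tilde{\mathfrak g} = \mathfrak{gl}_n(\CC)$, $\mathfrak g = \mathrm{Lie}(\widehat G) \cong \mathfrak{sl}_n(\CC)$, and $\mathfrak z = \mathrm{Lie}(Z(\widehat{\tG})) \cong \CC$. Differentiating the Galois action \eqref{galois action on L-gp} gives the involution $X \mapsto -J_n^{-1}\,{}^t X\,J_n$ on $\tilde{\mathfrak g}$, which preserves the trace-zero subspace; the inclusion $\mathfrak{sl}_n \hookrightarrow \mathfrak{gl}_n$ is therefore a $W_F$-equivariant splitting of $0 \to \mathfrak z \to \tilde{\mathfrak g} \to \mathfrak g \to 0$. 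Since the $\widehat{\tG}$-valued part of $\tphi$ (both $\tphi_0(w)$ and the image of $SL_2(\CC)$) acts by conjugation in $\widehat{\tG}$ and so preserves each summand, passing to $\tphi$-invariants yields $\tilde{\mathfrak g}^{\tphi} = (\mathfrak{sl}_n)^{\tphi} \oplus \mathfrak z^{\tphi} = \mathfrak g^{\phi} \oplus \mathfrak z^{\tphi}$. On the central line $\mathfrak z$ the action of $\tphi$ reduces to the Galois action, namely the derivative of $z \mapsto z^{-1}$, which is negation, so $\mathfrak z^{\tphi} = 0$ and $\dim \tilde{\mathfrak g}^{\tphi} = \dim \mathfrak g^{\phi}$.

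Combining the two steps, $\bar C_{\tphi}^{\circ} \subset C_{\phi}(\widehat G)^{\circ}$ is a containment of connected algebraic groups of the same dimension, so they must coincide. The delicate point I anticipate is the vanishing $\mathfrak z^{\tphi} = 0$, which is really the infinitesimal form of the identity $Z(\widehat{\tG})^{\Gamma} = \{\pm 1\}$ recorded in \eqref{gamma invariants}; this is a feature peculiar to the unitary setting over the quadratic extension $E/F$ and would fail for a split group, where $\mathfrak z^{\Gamma}$ would be all of $\mathfrak z$.
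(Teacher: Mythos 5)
Your argument is correct, and it reaches the conclusion by a genuinely different route from the paper. The paper first shows $\bar C_{\tphi}^\circ = (C_{\tphi}(\widehat{\tG})/Z(\widehat{\tG})^\Gamma)^\circ$ and then quotes the exact sequence
$1 \to C_{\tphi}(\widehat{\tG})/Z(\widehat{\tG})^\Gamma \to C_{\phi}(\widehat{G}) \to X^{\tG}(\tphi) \to 1$
from the proof of \cite[Lemma 5.3.4]{chaoli}; finiteness of the character-twist group $X^{\tG}(\tphi)$ then forces the identity components to agree. You instead bound the two groups against each other by an infinitesimal computation: the $\tphi$-stable splitting $\mathfrak{gl}_n = \mathfrak{sl}_n \oplus \mathfrak{z}$, the identification $(\mathfrak{sl}_n)^{\tphi} \cong \mathfrak{g}^{\phi}$, and the vanishing $\mathfrak{z}^{\tphi}=0$ (the infinitesimal form of \eqref{gamma invariants}) give $\dim C_{\tphi}(\widehat{\tG}) = \dim C_{\phi}(\widehat{G})$, and equality of connected closed subgroups of equal dimension finishes the proof. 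Your computation of the kernel of $pr$ on the centralizer is also right: a central $z$ lies in $C_{\tphi}(\widehat{\tG})$ only if it is Galois-fixed, which is exactly why the relevant quotient is by $Z(\widehat{\tG})^{\Gamma}=\{\pm 1\}$. What the two approaches buy: yours is self-contained and elementary, avoiding the citation to Chao--Li entirely, but it leans on the specific shape $\GL_n \twoheadrightarrow \PGL_n$ and on $\mathfrak{z}^{\Gamma}=0$; the paper's argument is softer, exhibits the precise finite obstruction $X^{\tG}(\tphi)$ (the same group that controls restriction of packets), and transports uniformly to the $\SL_n$ setting. One caveat on your closing remark: for a split group the vanishing $\mathfrak{z}^{\Gamma}=0$ indeed fails, but the lemma itself does not --- there the definition of $\bar C_{\tphi}^\circ$ quotients by the now positive-dimensional $Z(\widehat{\tG})^{\Gamma}$, and that correction term exactly cancels the contribution of $\mathfrak{z}^{\Gamma}$ in the dimension count.
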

\begin{proof}
We first consider the following commutative diagram
\[
\begin{CD}
@.   @.1  @.  1 @.
\\
@.      @.        @VVV   @VVV  @.\\
1 @>>> Z(\widehat{\tG})^\Gamma  @>>> C_{\tphi}(\widehat{\tG})^\circ @>>> \bar C_{\tphi}^\circ := C_{\tphi}(\widehat{\tG})^\circ/ Z(\widehat{\tG})^\Gamma @>>> 1 
\\
@.      @|          @VVV   @VVV  @.
\\
1 @>>> Z(\widehat{\tG})^\Gamma @>>> C_{\tphi}(\widehat{\tG}) @>>> C_{\tphi}(\widehat{\tG})/Z(\widehat{\tG})^\Gamma @>>> 1 \\
@.      @.          @VVV   @VVV  @.\\
@.   @. S_{\tphi}:=\pi_0(C_{\tphi}(\widehat{\tG})) @= S_{\tphi}:=\pi_0(C_{\tphi}(\widehat{\tG})) @>>> 1 .
\end{CD}
\]
It follows from the right vertical exact sequence above that
\[
\bar C_{\tphi}^\circ = C_{\tphi}(\widehat{\tG})^\circ/ Z(\widehat{\tG})^\Gamma \subset (C_{\tphi}(\widehat{\tG})/Z(\widehat{\tG})^\Gamma)^\circ \subset C_{\tphi}(\widehat{\tG})/Z(\widehat{\tG})^\Gamma.
\]
Note that the index 
$
[\bar C_{\tphi}^\circ : C_{\tphi}(\widehat{\tG})/Z(\widehat{\tG})^\Gamma]
$
is finite and so is $
[\bar C_{\tphi}^\circ : (C_{\tphi}(\widehat{\tG})/Z(\widehat{\tG})^\Gamma)^\circ].
$ Since $\bar C_{\tphi}^\circ$ is connected due to the isomorphism $C_{\tphi}(\widehat{\tG})^\circ/ Z(\widehat{\tG})^\Gamma \s (C_{\tphi}(\widehat{\tG})^\circ \cdot Z(\widehat{\tG}))/ Z(\widehat{\tG}),$ we have
\begin{equation} \label{an eqaulity in iendtity comp}
\bar C_{\tphi}^\circ = (C_{\tphi}(\widehat{\tG})/Z(\widehat{\tG})^\Gamma)^\circ.
\end{equation}
From the proof of \cite[Lemma 5.3.4]{chaoli} and the fact that $C_{\tphi}(\widehat{\tG})/Z(\widehat{\tG})^\Gamma \subset C_{\phi}(\widehat{G}),$ we have the exact sequence 
\[
1 \longrightarrow C_{\tphi}(\widehat{\tG})/Z(\widehat{\tG})^\Gamma \longrightarrow C_{\phi}(\widehat{G}) \longrightarrow X^{\tG}(\tphi) \longrightarrow 1,
\]
where $X^{\tG}(\tphi) := \{ \bold a \in H^1(W_F,\widehat{\U_1}) : \bold a \tphi \s \tphi  \}.$ Since $X^{\tG}(\tphi)$ is finite, we have
\[
(C_{\tphi}(\widehat{\tG})/Z(\widehat{\tG})^\Gamma)^\circ = C_{\phi}(\widehat{G})^\circ.
\]
Therefore, from \eqref{an eqaulity in iendtity comp}, Lemma \ref{lm for identity comp} is proved.
\end{proof}

Combining Lemmas \ref{gamma-inv of centers} and \ref{lm for identity comp}, we have proved \eqref{last arg for Arthur conj}.
This completes the proof of Theorem \ref{arthur conj}.
\subsection{Invariance of $R$-groups within $L$-packets and between inner forms} \label{behavior of Rgps}
Let $\phi \in \Phi_{\disc}(M)$ be given. In this section, we will discuss the behavior of Knapp-Stein $R$-groups within $L$-packets and between inner forms.
We first provide the following theorem that is crucial to the invariance (Theorems \ref{invariance within L-packet} and \ref{invariance 2 within L-packet}).
\begin{thm} \label{equality on W}
Fix a lift $\tphi \in \Phi_{\disc}(\tM)$ and $\tsigma \in \Pi_{\tphi}(\tM).$ Then, we have
\[
W(\sigma) \s 
\{
w \in W_M : {^w}\tsigma \s \tsigma \lambda ~~ \text{for some }~ \lambda \in (\tM/M)^D
\}.
\]
\end{thm}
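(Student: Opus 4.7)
The plan is to establish the claimed isomorphism as an equality of subgroups of $W_M$ by proving two inclusions, combining Clifford--Mackey theory for the abelian normal inclusion $M \lhd \tM$ (for which $\tM/M \s E^1$ by the exact sequence in Section \ref{structure for SUn}) with the explicit Weyl-action formulas of Section \ref{WM-actions on rep of Levis}. The key structural input is the triviality of the $W_M$-action on the last $\tG_m$-factor $\ttau$ and, correspondingly, on the $G_m$-factor $\tau$ of $\sigma$; this is what distinguishes the special unitary situation from a purely abstract setting.

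For the forward inclusion, suppose $w \in W(\sigma)$. Applying $w$ to the inclusion $\sigma \hookrightarrow \Res^{\tM}_M \tsigma$ produces ${^w}\sigma \hookrightarrow \Res^{\tM}_M({^w}\tsigma)$, and since ${^w}\sigma \s \sigma$ we conclude that $\sigma$ is a common irreducible constituent of $\Res^{\tM}_M \tsigma$ and $\Res^{\tM}_M({^w}\tsigma)$. A standard Clifford--Mackey argument for the abelian normal inclusion $M \lhd \tM$ (compare \cite[Section 2]{chgo12} and \cite[Th\'eor\`eme 8.1]{la85}) then shows that two irreducibles of $\tM$ whose $M$-restrictions share a constituent must differ by a character of $\tM/M$; hence ${^w}\tsigma \s \tsigma\lambda$ for some $\lambda \in (\tM/M)^D$.

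For the reverse inclusion, suppose ${^w}\tsigma \s \tsigma\lambda$ with $\lambda \in (\tM/M)^D$. Since $\lambda|_M = \mathbbm{1}$, we have $\Res^{\tM}_M({^w}\tsigma) \s \Res^{\tM}_M(\tsigma\lambda) = \Res^{\tM}_M \tsigma$, so $w$ permutes the multiplicity-free set of irreducible constituents of $\Res^{\tM}_M \tsigma$, and it remains to show the permutation fixes $\sigma$. Using the factorization $\tsigma = \tsigma_1 \otimes \cdots \otimes \tsigma_k \otimes \ttau$ together with the formula
\[
\sigma(g_0,h) = \sigma_1(g_1)\otimes\cdots\otimes\sigma_k(g_k)\otimes\ttau(\alpha_m(g_0)^{-1})\tau(h)
\]
from Section \ref{WM-actions on rep of Levis}, and the fact that $\sigma_l = \tsigma_l$ on the $\GL$-factors (restriction is trivial for general linear groups), we parametrize the constituents of $\Res^{\tM}_M \tsigma$ through the constituents of $\Res^{\tG_m}_{G_m} \ttau$. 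Because $W_M$ acts trivially on the $\ttau$- and $\tau$-factors, the permutation $w$ preserves the $\tau$-label of the constituent, forcing ${^w}\sigma \s \sigma$ and hence $w \in W(\sigma)$.

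The main obstacle lies in the reverse inclusion, specifically in matching the parametrization of $M$-constituents compatibly with the triviality of the Weyl action on the $\tG_m$-factor. For the $C_i$-reflections the quantity $\det g_0$ changes by $N_{E/F}(\det g_i) \in F^\times$, so one must verify that the $\alpha_m$-twist in the formula for $\sigma|_M$ absorbs this discrepancy; this is essentially the content of the simplification following equation \eqref{action Ci}, where $\alpha_m(g_0)^{-1}\alpha_m(C_ig_0)$ is central enough that the conjugation $\ttau(\alpha_m(C_ig_0)^{-1})\ttau(\alpha_m(g_0)^{-1}\alpha_m(C_ig_0)) = \ttau(\alpha_m(g_0)^{-1})$ holds. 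Once this compatibility is in hand, the triviality of the Weyl action on $\tau$ forces the permutation of constituents induced by any $w$ in the right-hand side to be trivial, completing the argument.
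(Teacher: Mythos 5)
Your overall strategy---two inclusions, Clifford--Mackey for the forward one, triviality of the $W_M$-action on the $\tG_m$-factor for the reverse one---matches the paper's, and the forward inclusion is fine (the paper dismisses it as obvious). The genuine gap is in the reverse inclusion, at the step ``the permutation $w$ preserves the $\tau$-label of the constituent, forcing ${^w}\sigma \s \sigma$.'' Triviality of the Weyl action on the $\ttau$- and $\tau$-factors is not enough to conclude this. Writing $\tsigma = \tsigma_1 \otimes \cdots \otimes \tsigma_k \otimes \ttau$, the hypothesis ${^w}\tsigma \s \tsigma\lambda$ only gives $\varepsilon_i(\tsigma_{s(i)}) \s \tsigma_i \cdot (\lambda|_{\GL_{n_i}(E)})$ and $\ttau \s \ttau \cdot (\lambda|_{\tG_m})$, and these restrictions of $\lambda$ to the individual direct factors of $\tM$ are in general \emph{nontrivial}: the $\GL$-factors and $\tG_m$ are not contained in $M$; only the twisted embedding $(g,h) \mapsto \diag(g, \alpha_m(g)^{-1}h, \varepsilon(g))$ of \eqref{imp iso} lands in $M$. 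Consequently ${^w}\sigma$, computed factor by factor, has the same $\tau$-part as $\sigma$ but \emph{different} $\GL$-parts, and you still owe an $M$-intertwiner between the two. Producing it is the actual content of the paper's proof: one takes $T = \bigotimes T_i \otimes T_\lambda$ built from the factorwise intertwiners and verifies $T\,{^w}\sigma^*(g,h) = \sigma^*(g,h)\lambda(g,h)T = \sigma^*(g,h)T$, the point being that the factorwise $\lambda$-twists reassemble into $\lambda$ evaluated on the image of $(g,h)$ in $M$, where $\lambda$ is trivial. Your ``main obstacle'' paragraph instead dwells on the simplification of $\alpha_m(g_0)^{-1}\alpha_m(C_ig_0)$, which is the already-completed Section \ref{weyl group actions} computation of the Weyl action on $M$, not the crux of the theorem.

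Two further remarks. First, your labeling argument can be repaired without the explicit intertwiner by restricting to $G_m \subset M$: both $\sigma$ and ${^w}\sigma$ are constituents of $\Res^{\tM}_M \tsigma$ whose restrictions to $G_m$ are $\tau$-isotypic, so it would suffice to prove that distinct constituents of $\Res^{\tM}_M \tsigma$ have non-isomorphic $G_m$-restrictions---but that injectivity of the parametrization by constituents of $\Res^{\tG_m}_{G_m}\ttau$ is itself a claim requiring proof (the operators $\ttau(\alpha_m(g)^{-1})$ permute the $G_m$-isotypic subspaces of $V_{\ttau}$, so the decomposition is not the naive one), and you assert it rather than establish it. Second, the paper proves the displayed set equality only for the distinguished constituent $\sigma^*$ of \eqref{def sigma*} and then obtains the statement for an arbitrary $\sigma$ as an isomorphism via $W(\sigma) \s W(\sigma^*)$, quoting \cite{chgo12}; since you work with an arbitrary constituent throughout and claim the induced permutation of constituents is trivial, your version is stronger and correspondingly needs the uniform injectivity above.
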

\begin{proof}
It suffice to consider the case of $m \geq 2,$ since the equality is already true for the case  $m =1$ due to \cite[Lemma 2.5]{go06}.
Let $m \geq 2.$ We then recall from \eqref{form of bM} that
\[
M \s ({\GL}_{n_1}(E) \times \cdots \times {\GL}_{n_k}(E)) \rtimes {\SU}_m(F).
\]
Since $\tsigma$ is of the form 
\[
\tsigma_1 \otimes \tsigma_2 \otimes \cdots \otimes \tsigma_k \otimes \ttau,
\]
we set $\tsigma = \tsigma_0 \otimes \ttau.$
Denote by $\tau$ is an irreducible constituent in $\Res^{\U_m}_{\SU_m}(\ttau).$
Write $V_{\tsigma},$ $V_{\tsigma_0}=\otimes_{i=1}^kV_{\tsigma_i},$ $V_{\ttau},$ $V_\tau$ for the corresponding representation $\CC$-vector spaces.

Given $(g,h)\in M$ with $g = (g_1, \cdots, g_k) \in {\GL}_{n_1}(E) \times \cdots \times {\GL}_{n_k}(E)$ and $h \in {\SU}_m(F),$ $v'_0 = v_1 \otimes \cdots \otimes v_k \in V_{\tsigma_0},$ and $v_0 \in V_\tau,$
we define a representation $\sigma^*$ of $M$ 
\begin{equation} \label{def sigma*}
\sigma^*((g, h))(v'_0 \otimes v_0) =\tsigma_0(g)v'_0 \otimes \ttau(\alpha_m(g)^{-1})\tau(h)v_0.
\end{equation}
It turns out from \cite[p.353]{go06} that
$\sigma^*$ is an irreducible constituent in $\Res^{\U_n}_{\SU_n}(\tsigma).$

In what follows, using the idea in the proof of \cite[Proposition 2.9]{go06}, we will show
\begin{equation} \label{W*=}
W(\sigma^*) = \{
w \in W_M : {^w}\tsigma \s \tsigma \lambda ~~ \text{for some }~ \lambda \in (\tM/M)^D
\}.
\end{equation}
The inclusion $\subset$ is obvious. 
Suppose that ${^w}\tsigma \s \tsigma \lambda$ for some $\lambda \in (\tM/M)^D.$ 
From \eqref{desc WM}, we set $w=sc$ with $s\in S_k$ and $c \in \ZZ_2^k.$ 
From Sections \ref{WM-actions on Levis} and \ref{WM-actions on rep of Levis}, we have
\[
{^w}\tsigma \s \bigotimes_{i=1}^k\varepsilon_i(\tsigma_{s(i)}) \otimes \ttau,
\]
where $\varepsilon_i$ is either $\varepsilon$ or trivial.
Since ${^w}\tsigma \s \tsigma \lambda,$ we have
$\varepsilon_i(\tsigma_{s(i)})  \s \tsigma_i \lambda$
for each $i,$ and $\ttau \s \ttau \lambda.$ 
We fix intertwining maps $T_i:V_{\tsigma_{s(i)}}\rightarrow V_{\tsigma_i}$ with $\tsigma_i \lambda T_i=T_i \varepsilon_i(\tsigma_{s(i)}),$ 
and $T_\lambda:V_{\ttau} \rightarrow V_{\ttau}$ with $\ttau \lambda T_{\lambda}=T_{\lambda} \ttau.$
Write
\[
T=\bigotimes_{i=1}^k T_i \otimes T_{\lambda}.
\]
Note that $\tsigma T = T {^w}\tsigma.$
Then, from the definition \eqref{def sigma*} of $\sigma^*,$ we have
\begin{align*}
T{^w}\sigma^*(g,h)(v'_0 \otimes v_0) 
&= \bigotimes_{i=1}^k T_i \varepsilon_i(\tsigma_{s(i)})(g_i)v_i \otimes T_{\lambda} \ttau (\alpha_m({^w}g)^{-1})\tau(\alpha_m(g)^{-1}\alpha_m({^w}g)h)v_0 \\
&= \bigotimes_{i=1}^k T_i \varepsilon_i(\tsigma_{s(i)})(g_i)v_i \otimes T_{\lambda} \ttau (\alpha_m(g)^{-1})\tau(h)v_0 \\
&= \bigotimes_{i=1}^k 
\tsigma_i(g_i)\lambda(g_i)T_i(v_i) \otimes  \ttau(\alpha_m(g)^{-1}) \lambda(\alpha_m(g)^{-1}) \tau(h)T_{\lambda}(v_0) \\
&= \sigma^*\lambda(g,h)T(v_0'\otimes v_0)\\
&=\sigma^*T(v_0'\otimes v_0),
\end{align*}
since 
\[
\lambda(g,h) \overset{\eqref{imp iso}}{=} \lambda( 
\left( \begin{array}{ccc}
 g& & \\
 & \alpha_m(g)^{-1}h & \\
 & & \varepsilon(g) \\ 
 \end{array} \right)) = 1.
\]
Thus, we have $w \in W(\sigma^*),$ which implies
\eqref{W*=}. 
From the proof of \cite[Theorem 4.19]{chgo12}, we note
\begin{equation*} \label{W*}
W(\sigma) \s W(\sigma^*),
\end{equation*}
since both are in the same restriction $\Res^{\U_n}_{\SU_n}(\tsigma).$
Therefore, we proved the theorem.
\end{proof}
\begin{rem}
The difference between $W(\sigma)$ and $W(\tsigma)$ turns out to be whether the twisting by a character in $(\tM/M)^D$ is allowed or not.
We also refer to \cite[Theorem 3.7]{go06} for another description of this difference.
\end{rem}
\begin{rem}
We note that Theorem \ref{equality on W} holds for $F$-inner forms $\bM'$ as well. 
To be precise, given $\sigma' \in \Pi_{\phi}(M'),$ we fix $\tsigma' \in \Pi_{\tphi}(\tM').$ 
Then, we have
\[
W(\sigma') \s 
\{
w \in W_{M'} : {^w}\tsigma' \s \tsigma' \lambda ~~ \text{for some }~ \lambda \in (\tM'/M')^D
\}.
\]
\end{rem}
\begin{thm} \label{invariance within L-packet}
Given $\sigma_1, ~ \sigma_2 \in \Pi_{\phi}(M),$ we have
\[
R_{\sigma_1} \s R_{\sigma_2}.
\]
Furthermore, given $\sigma'_1, ~ \sigma'_2 \in \Pi_{\phi}(M'),$ we have
\[
R_{\sigma'_1} \s R_{\sigma'_2}.
\]
\end{thm}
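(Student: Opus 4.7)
The plan is to reduce the theorem to Theorem \ref{equality on W} together with the unitary invariance results of Section \ref{R-groups for U(n) and its inner forms}, splitting the argument into the equality of the connected components $W^{\circ}_{\sigma_i}$ and the isomorphism of the stabilizers $W(\sigma_i)$.

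First, for the $W^{\circ}$-equality, I would observe that the rank-one Plancherel measures are compatible with restriction from $\tM$ to $M$ (cf.\ \cite[Proposition 2.4]{choiy1} and the argument in the proof of Theorem \ref{arthur conj}), which gives $W^{\circ}_{\sigma_i} = W^{\circ}_{\tsigma_i}$ for any lift $\tsigma_i \in \Pi_{\tphi}(\tM)$ of $\sigma_i$. By Theorem \ref{iso bw R for tG}, the right-hand side equals $W^{\circ}_{\tphi}$ independently of $i$, hence $W^{\circ}_{\sigma_1} = W^{\circ}_{\sigma_2}$.

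Next, for the stabilizer part, Theorem \ref{equality on W} identifies $W(\sigma_i) \simeq S(\tsigma_i) := \{w \in W_M : {}^w \tsigma_i \simeq \tsigma_i \lambda \text{ for some } \lambda \in (\tM/M)^D\}$. When $\sigma_1$ and $\sigma_2$ arise as constituents of the same restriction $\Res^{\tM}_M \tsigma$, taking $\tsigma_1 = \tsigma_2 = \tsigma$ yields the conclusion immediately from the isomorphisms $W(\sigma_1) \simeq S(\tsigma) \simeq W(\sigma_2)$ in Theorem \ref{equality on W}. In the remaining case of distinct lifts $\tsigma_i \in \Pi_{\tphi}(\tM)$, I would decompose $\tsigma_i = \tsigma_0 \otimes \ttau_i$ along the product structure $\tM \simeq \GL_{n_1}(E) \times \cdots \times \GL_{n_k}(E) \times \tG_m$, observing that the $\tsigma_0$ component is common since the $\GL$-packets are singletons. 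Since $W_M$ acts trivially on the $\tG_m$-factor by Section \ref{WM-actions on rep of Levis}, the condition defining $S(\tsigma_i)$ decouples into a $\tsigma_0$-condition independent of $i$ together with a stabilization condition $\ttau_i \simeq \ttau_i \lambda_m$ for the induced character $\lambda_m \in (\tG_m/G_m)^D$. Using Mok's $S$-group parametrization \cite{mok13} of $\Pi_{\tphi_-}(\tG_m)$, the set of admissible $\lambda_m$ is shown to depend only on $\tphi_-$ and not on the specific $\ttau_i$ in the packet, yielding $S(\tsigma_1) = S(\tsigma_2)$. Combining the two invariance statements gives $R_{\sigma_1} = W(\sigma_1)/W^{\circ}_{\sigma_1} \simeq W(\sigma_2)/W^{\circ}_{\sigma_2} = R_{\sigma_2}$. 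The argument for $\sigma'_1, \sigma'_2 \in \Pi_\phi(M')$ is verbatim, using the inner-form version of Theorem \ref{equality on W} noted in the Remark following it, together with the inner-form statements in Section \ref{R-groups for U(n) and its inner forms}.

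The main obstacle is the last step above: showing that the stabilization condition $\ttau \simeq \ttau \lambda_m$ is independent of the choice of $\ttau$ within the unitary packet $\Pi_{\tphi_-}(\tG_m)$. Unlike the quasi-split setting of Theorem \ref{iso bw R for tG}, where $W(\tsigma) = W_{\tphi}$ holds without any character twisting, here the twist by $\lambda_m \in (\tG_m/G_m)^D \simeq \widehat{E^1}$ interacts non-trivially with the component-group labeling of the packet, and a careful analysis via Mok's parametrization is required to verify the claimed independence.
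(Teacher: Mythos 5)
Your reduction to the equality of the $W^{\circ}$'s plus a comparison of the stabilizers $W(\sigma_i)$ via Theorem \ref{equality on W} is exactly the paper's strategy, as is the observation that the $\GL$-components of the two lifts $\tsigma^1,\tsigma^2\in\Pi_{\tphi}(\tM)$ coincide because the $\GL$-packets are singletons. But your argument diverges from the paper's, and is left incomplete, at precisely the step you flag as the main obstacle: you propose to show that the stabilization condition $\ttau\simeq\ttau\lambda_m$ on the $\tG_m$-component depends only on the packet $\Pi_{\tphi_-}(\tG_m)$ and not on the member $\ttau$, to be extracted from Mok's parametrization, and you do not carry this out. That is a genuine gap: nothing in the write-up establishes that the set of admissible $\lambda_m$ is the same for $\ttau_1$ and $\ttau_2$, and that claim is strictly stronger than what the theorem needs.

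The paper sidesteps the issue by exploiting the existential quantifier over $\lambda$ in Theorem \ref{equality on W}. Given $w\in W(\sigma_1)$, so that ${^w}\tsigma^1\simeq\tsigma^1\lambda$, one does not try to reuse the same $\lambda$ for $\tsigma^2$; one replaces it by the character $\lambda'$ which agrees with $\lambda$ on the $\GL$-blocks and is trivial on $\U_m(F)$. Since $W_M$ acts trivially on the $\tG_m$-component (Section \ref{WM-actions on rep of Levis}) and the $\GL$-components of $\tsigma^2$ are those of $\tsigma^1$, the relation ${^w}\tsigma^2\simeq\tsigma^2\lambda'$ then holds with no condition on $\ttau_2$ whatsoever, giving $W(\sigma_1)\hookrightarrow W(\sigma_2)$ and, by symmetry, equality. (The remark following the theorem observes that $\mathbbm{1}$ on $\U_m(F)$ may be replaced by any $\lambda_0$ stabilizing $\ttau_2$ --- the closest the paper comes to the statement you were trying to prove.) If you wish to salvage your route, you would have to prove the packet-invariance of the stabilizer $\{\lambda_m : \ttau\simeq\ttau\lambda_m\}$, plausibly true via the action of character twists on the component-group labels, but that is exactly the hard input your proposal defers; note also that the restrictions of a character $\lambda\in(\tM/M)^D$ to the $\GL$-blocks and to the $\U_m(F)$-block are both governed by a single character of $E^1$ through the determinant, so the ``decoupling'' of the condition into independent $\tsigma_0$- and $\ttau$-conditions is not automatic either.
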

\begin{proof}
Since $W^{\circ}(\sigma_1) = W^{\circ}(\sigma_2),$ it is enough to show that 
\[
W(\sigma_1) \s W(\sigma_2).
\]
Fix $\tsigma^1$ and $\tsigma^2$  in $\Pi_{\tphi}(\tM)$ such that $\sigma_1 \hookrightarrow \Res^{\U_n}_{\SU_n}(\tsigma^1)$ and $\sigma_2 \hookrightarrow \Res^{\U_n}_{\SU_n}(\tsigma^2).$ 
Suppose that ${^w}\tsigma^1 \s \tsigma^1 \lambda$ for some $\lambda \in (\tM/M)^D.$
From \eqref{desc WM}, we write $w=sc$ with $s\in S_k$ and $c \in \ZZ_2^k.$ 
Set $\tsigma^1 \s \tsigma^1_{1} \otimes \tsigma^1_{2} \otimes \cdots \otimes \tsigma^1_{k} \otimes \ttau_1.$
Then $\tsigma^1_{i} \s \varepsilon_i(\tsigma^1_{s(i)}) \lambda$
for each $i,$ and $\ttau_1 \s \ttau_1 \lambda.$ 
Set 
\[
\lambda' := \left\{ \begin{array}{ll}
         \lambda, & \mbox{on}~~{\GL}_{n_1}(E) \times \cdots \times {\GL}_{n_k}(E),\\
        \mathbbm{1}, & \mbox{on} ~~{\U}_m(F),
        \end{array} \right. 
\]
which is a character on $\tM/M.$
Moreover, $\lambda'$ satisfies
\[
{^w}\tsigma^2 \s \tsigma^2 \lambda',
\]
which implies, from Theorem \ref{equality on W}, that 
$W(\sigma_1) \hookrightarrow W(\sigma_2).$
In the same manner, one can verify $W(\sigma_2) \hookrightarrow W(\sigma_1).$
Thus, we have $R_{\sigma_1} \s R_{\sigma_2}.$
Since the method is the same, we omit the proof for $R_{\sigma'_1} \s R_{\sigma'_2}.$
\end{proof}
\begin{rem}
The trivial character $\mathbbm{1}$ in the definition of $\lambda'$ can be replaced by 
a character $\lambda_0$ on $\U_m(F)$ such that $\ttau_2 \s \ttau_2 \lambda_0,$ where $\ttau_2$ is the representation of $\U_m(F)$ in the decomposition $\tsigma^2 \s \tsigma^2_{1} \otimes \tsigma^2_{2} \otimes \cdots \otimes \tsigma^2_{k} \otimes \ttau_2.$
\end{rem}
\begin{thm} \label{invariance 2 within L-packet}
Given $\sigma \in \Pi_{\phi}(M)$ and $\sigma' \in \Pi_{\phi}(M),$ we have
\[
R_{\sigma} \s R_{\sigma'}.
\]
\end{thm}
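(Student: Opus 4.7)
Read literally, both $\sigma$ and $\sigma'$ lie in the same $L$-packet $\Pi_\phi(M)$ for a single Levi subgroup $M$ of $\bG_n$, so the conclusion $R_\sigma \simeq R_{\sigma'}$ is precisely the first assertion of Theorem~\ref{invariance within L-packet} applied with $\sigma_1 := \sigma$ and $\sigma_2 := \sigma'$. The plan is therefore to deduce the statement directly from that theorem; no new ingredients are required, and I sketch the short reduction below in case one wishes to rederive it for reference.

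First, the reflection subgroup $W^\circ_\sigma$ depends only on the common $L$-parameter $\phi$, not on the individual representation in the packet: by the compatibility of rank-one Plancherel measures with restriction (the same input used in the proof of Theorem~\ref{arthur conj}) together with \eqref{identity of W'}, one obtains $W^\circ_\sigma = W^\circ_{\tphi} = W^\circ_{\sigma'}$ for any common lift $\tphi \in \Phi_{\disc}(\tM)$. It therefore suffices to exhibit an isomorphism $W(\sigma) \simeq W(\sigma')$.

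For this, I would quote Theorem~\ref{equality on W}: fix lifts $\tsigma^1, \tsigma^2 \in \Pi_{\tphi}(\tM)$ of $\sigma, \sigma'$ respectively, with decompositions $\tsigma^j \simeq \tsigma_1^j \otimes \cdots \otimes \tsigma_k^j \otimes \ttau_j$. Given $w \in W(\sigma)$ realized by some $\lambda \in (\tM/M)^D$ satisfying ${}^w\tsigma^1 \simeq \tsigma^1 \lambda$, I would produce a character $\lambda'$ witnessing $w \in W(\sigma')$ by letting $\lambda'$ equal $\lambda$ on the blocks $\GL_{n_i}(E)$, on which $W_M$ acts nontrivially, and replacing its $\U_m(F)$-component by any character $\lambda_0$ with $\ttau_2 \lambda_0 \simeq \ttau_2$ (for instance the trivial character $\mathbbm{1}$, exactly as in the proof of Theorem~\ref{invariance within L-packet}). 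The symmetric construction yields the reverse inclusion, whence $W(\sigma) \simeq W(\sigma')$ and therefore $R_\sigma \simeq R_{\sigma'}$.

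The only point to verify---and the main (indeed only) observation beyond the previous theorem---is that the character $\lambda'$ so defined is well-defined as an element of $(\tM/M)^D$ and genuinely intertwines $w$ with $\tsigma^2$. The first follows from the block decomposition of $\tM/M$ via the determinant maps described in Section~\ref{structure for SUn}; the second follows from the fact that $W_M$ acts trivially on the $\U_m(F)$-factor by \eqref{action ij-s} and \eqref{action Ci}, so the equality ${}^w\tsigma^2 \simeq \tsigma^2 \lambda'$ reduces to the corresponding equalities on the $\GL$-factors (already supplied by $\lambda$) together with a tautology on the unitary block. There is no substantial obstacle, and this is why the authors can present the statement as essentially immediate from Theorem~\ref{invariance within L-packet}.
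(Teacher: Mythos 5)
Your reading of the statement is literal, and therein lies the problem: as printed, the hypothesis ``$\sigma' \in \Pi_{\phi}(M)$'' is a typo. The introduction announces this result as ``given $\sigma \in \Pi_{\phi}(M)$ and $\sigma' \in \Pi_{\phi}(M')$, we prove $R_{\sigma} \s R_{\sigma'}$,'' the section is titled ``Invariance of $R$-groups within $L$-packets \emph{and between inner forms},'' and Theorem \ref{invariance within L-packet} already disposes of the within-packet case for $M$ and for $M'$ separately, so under your reading Theorem \ref{invariance 2 within L-packet} would be vacuous. The intended content is the comparison \emph{across} the two inner forms, with $\sigma'$ a representation of the Levi subgroup $M'$ of the non-quasi-split form $\bG'_n$; this is exactly what is used in Remark \ref{final remark} to transfer \cite[Theorem 3.7]{go95} to $\bG'$. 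Your proposal, which simply invokes Theorem \ref{invariance within L-packet} with $\sigma_1 := \sigma$ and $\sigma_2 := \sigma'$, therefore does not prove the theorem the authors intend, and the paper's own (one-line) proof says ``similar to'' rather than ``a special case of'' the previous theorem for precisely this reason.

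That said, the mechanics you describe are the right ones and do adapt to the cross-form setting; what is missing from your write-up are the ingredients that make the adaptation legitimate. Concretely: (i) the identification $W_M = W_{M'}$ of \eqref{identity for Weyls}, without which ``$W(\sigma) \s W(\sigma')$'' is not even a comparison of subgroups of a common group; (ii) the $M'$-analogue of Theorem \ref{equality on W} (stated in the remark following it), describing $W(\sigma')$ via a lift $\tsigma' \in \Pi_{\tphi}(\tM')$ and characters in $(\tM'/M')^D$; (iii) the observation that the lifts $\tsigma \s \tsigma_1 \otimes \cdots \otimes \tsigma_k \otimes \ttau$ and $\tsigma' \s \tsigma'_1 \otimes \cdots \otimes \tsigma'_k \otimes \ttau'$ share their $\GL_{n_i}(E)$-components (the $\GL$-factors of $\tM$ and $\tM'$ coincide and the packets $\Pi_{\tphi_i}(\GL_{n_i}(E))$ are singletons), while $W_M$ acts trivially on the remaining factor; this is what allows a witness $\lambda \in (\tM/M)^D$ for ${}^w\tsigma \s \tsigma\lambda$ to be converted into a witness $\lambda' \in (\tM'/M')^D$ for ${}^w\tsigma' \s \tsigma'\lambda'$ exactly as in your $\lambda'$-construction; and (iv) the chain $W^{\circ}_{\sigma} = W^{\circ}_{\tsigma} = W^{\circ}_{\tphi} = W^{\circ}_{\tsigma'} = W^{\circ}_{\sigma'}$, which requires the compatibility of Plancherel measures with restriction on \emph{both} forms together with \eqref{identity of W'} from Theorem \ref{iso bw R for tG}; the equality you quote is stated only for two members of the same packet of $M$. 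With (i)--(iv) supplied, your argument goes through and recovers the intended theorem.
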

\begin{proof}
The proof is similar to Theorem \ref{invariance within L-packet}.
\end{proof}
\begin{rem} \label{final remark}
Due to \cite[Theorem 3.7]{go95}, Theorem \ref{invariance 2 within L-packet} shows that the Knapp-Stein $R$-group $R_{\sigma'}$ is of the form 
\[
\Gamma_{\sigma'} \ltimes \ZZ^d,
\]
for some subgroup $\Gamma_{\sigma'}$ in $R_{\sigma'}$ defined in \textit{loc. cit} and some integer $d$ in Corollary \ref{r-groups for G'=bc}. 
This implies that $R_{\sigma'}$ is in general non-abelian from the argument in \cite[Remark p.359]{go95}.
\end{rem}

\end{document}